\newtheorem{theorem}{Theorem}[section]
\newtheorem{lemma}[theorem]{Lemma}
\newtheorem{proposition}[theorem]{Proposition}
\begin{document}

\title{Multifractal analysis of weighted ergodic averages}

\date{}

\author{Aihua Fan}

\address[Aihua FAN]{LAMFA, UMR 7352 CNRS, University of Picardie,
33 rue Saint Leu, 80039 Amiens, France}
\email{ai-hua.fan@u-picardie.fr}

\begin{abstract}
	We propose to study the multifractal behavior of\\
	 weighted ergodic averages. Our study in this paper is concentrated on the symbolic dynamics. We introduce a thermodynamical formalism  which leads to a multifractal spectrum. It is proved that this thermodynamical formalism applies to different kinds of dynamically defined  weights, including stationary ergodic random weights, uniquely ergodic weights etc. 
	But the validity of the thermodynamical formalism for very irregular weights, like M\"{o}bius function,  is an unsolved problem. The paper ends with some other unsolved problems.
\end{abstract}

\maketitle

\section{Introduction}

For a given topological dynamical system 
$(X, T)$,  a continuous function $f\in C(X)$ and a sequence
of weights $w=(w_n) \subset \mathbb{C}$ such that $\sum_{n=0}^\infty|w_n|=\infty$,  we would like to study the asymptotic behavior of the weighted Birkhoff sums
$$
S_N^{(w)} f(x) = \sum_{n=0}^{N-1} w_n f(T^n x). 
$$ 
One of associated problems is the multifractal analysis of $S_N^{(w)}f(x)$. This  is a difficult problem even for simple dynamical systems when the sequence of weights is irregular like the M\"{o}bius function $\mu: \mathbb{N}\to \{-1, 0, 1\}$. Recall that $\mu(1) =1$; $\mu(n)=(-1)^k$ if $n=p_1\cdots p_k$ a product of $k$ distinct primes;  $\mu(n)=0$ otherwise. 
\medskip
 
 The usual Birkhoff sums (with constant weight $w_n=1$) were extensively studied in the literature for different dynamical systems (\cite{BD2004}, \cite{BSS2002}, \cite{BSS2002b}, \cite{Chung2010},
 \cite{FF}, \cite{FFW}, \cite{FLP}, \cite{FLW2002}, \cite{Hofbauer2007}, 
 \cite{IJ2015},
  \cite{JJOP2007},
 \cite{KS2007},
 \cite{Olivier1999},
 \cite{Olsen2003},   \cite{Olsen2006},  \cite{Olsen2008},  \cite{OlsenW2007}, 
 \cite{PW2001}, \cite{Reeve2011}, \cite{Reeve2012}, \cite{TV1999}, \cite{TV2003},
 \cite{Thompson2009}, \cite{Tian2017}).
 Some variants or generalizations of the usual Birkhoff sums were also well studied (\cite{BF2012}, \cite{B1996}, \cite{BCW2013},
 \cite{FJLR2015}, \cite{FLM2010}, \cite{FLWW2009},  \cite{Feng2003}, 
 \cite{Feng2009}, \cite{FH2010}, \cite{IJ2015b}, \cite{PS2007}, \cite{PW1999}).
 For surveys on the topic, see  \cite{Climenhaga2014}, \cite{Fan2014}, \cite{PW1997}. 
 \medskip
 
In this paper we consider the symbolic dynamics $(X, T)$ where  $X:=S^{\mathbb{N}}$,  $S$ being a finite set of $q$ elements ($q \ge 2$  being an integer),  and $T$ is the
shift transformation defined by $(Tx)_{n} =x_{n+1}$ for $x=(x_n) \in S^{\mathbb{N}}$. 
 Let us assume that $(w_n)$ is bounded so that
$w_n f(T^n x)$ is a bounded sequence of functions  in $C(X)$. 
So, 
more generally,  we consider
\begin{equation}\label{WES}
     S_N f(x) := \sum_{n=0}^{N-1} f_n(T^n x) =\sum_{n=0}^{N-1} f_n(x_n, x_{n+1}, \cdots)
\end{equation}
where $f=(f_n) \subset C(X)$ is 
a sequence of continuous functions such that $\|f_n\|_\infty=O(1)$, where $\|\cdot\|_\infty$ denotes the supremum norm in $C(X)$.
Define the lower and upper weighted Birkhoff averages by
$$
    \underline{A}(x) := \liminf_{N\to \infty}
    \frac{S_Nf(x)}{N};
    \quad 
    \overline{A}(x) := \limsup_{N\to \infty}
    \frac{S_Nf(x)}{N}.
$$
For $-\infty <a \le b <+\infty$, we define the level set
$$
    E([a, b]) :=\{x \in X: a \le \underline{A}(x) \le \overline{A}(x)\le b\}.
$$
If $[a, b]$ reduces to a singleton $\{a\}$, we write $E(a)$ instead of  
$E([a, a])$. 
The space $X$ is equipped with its natural distance
defined by $d(x, y) = q^{-n}$ where $n$ is the least $k$ such that 
$x_k\not=y_k$ and we can then
define the Hausdorff dimension $\dim E$ and the packing dimension ${\rm Dim}\ E$ of a set $E$ (see \cite{Falconer1990} or \cite{Mattila1995} for the definitions). 
We are concerned with the multifractal analysis of $S_N f$
defined by (\ref{WES}), in other words,  we would like to compute the dimensions of $E([a, b])$'s.

To this end, let us present the following thermodynamical formalism, adapted from that introduced in \cite{Fan1997}. Let $dx = \sigma(dx)$ denote the uniform
Bernoulli measure on $X:=S^\infty$, which is defined by
$$
     \sigma ([x_0, x_1,\cdots, x_{n-1}])
     = q^{-n}
$$
where $[x_0, x_1, \cdots, x_{n-1}]$ is the cylinder set consisting of all points $y$ such that $y_k=x_k$ for $0\le k <n$. This Bernoulli measure, which is $T$-invariant,  is  
our reference measure. For any real number $\lambda \in \mathbb{R}$, define
$$
P_{n} (x) :=\exp  \left(\sum_{k=0}^{n-1} f_k(x_k, x_{k+1},\cdots )\right); \quad   Z_n(\lambda)
: = \mathbb{E} P_{n}^\lambda(x) 
$$
where the expectation $\mathbb{E}$ is relative to the Bernoulli measure $\sigma$. 
In the sequel, 
we will make the following assumptions
\begin{equation*}\label{H1}
	{\rm (H1)} \quad
	\qquad \qquad \forall \lambda \in \mathbb{R}, \quad\phi(\lambda)
	: = \lim_{n \to \infty} \frac{1}{n} \log Z_n (\lambda)\ \ {\rm exists}. \qquad \qquad
\end{equation*} 
\begin{equation*}\label{H2}\
	{\rm (H2)} \ \  \ \sup_{N\ge 1} \ \ \sup_{x_0=y_0, \cdots, x_{N-1}=y_{N-1}}\ \  \sum_{k=0}^{N-1} |f_k(x_{k+1}, \cdots) - f_k(y_{k+1}, \cdots)| <\infty.
\end{equation*}

The function $\phi$ is convex then continuous. It is called the {\bf pressure function}, associated to $(f_n)$. Recall that 
the {\bf subderivative} of $\phi$ at $\lambda$, denoted $\partial \phi(\lambda)$,
is the set of real numbers $d$'s such that
$$
\forall \eta \in \mathbb{R}, \ \ \ \phi (\lambda +\eta) - \phi(\lambda) \ge d \eta.
$$
Notice that $\partial \phi(\lambda)$ is a closed interval. 
The {\bf conjugate} of a convex function $\phi$ on $\mathbb{R}$ is defined by
$$
\phi^*(\beta) = \sup_{\alpha\in \mathbb{R}} (\beta \alpha -\phi(\alpha)) \qquad (\forall \beta \in \mathbb{R}),
$$
which is a convex function too. For all these notions and facts on convex functions we can refer to \cite{Ellis}. Let
$$
\psi(\lambda):= \phi(\lambda) + \log q.
$$
It would be better to call $\psi$ the pressure function, because,  when
$f_n(x) = f(T^n)$ for all $n\ge 1$ ($f$ being fixed), $\psi(\lambda)$ is exactly the pressure of $\lambda f$ in the usual sense (see \cite{Bowen1975})
defined by 
$$
     \lim_{N\to\infty}\frac{1}{N} \log \sum_{a_0, a_1, \cdots, a_{N-1} } \ \ \sup_{x \in [a_0, a_1, \cdots, a_{N-1}]} e^{\lambda \sum_{k=0}^{N-1} f(T^k x)}.
$$

One of our main results is the following theorem.

\begin{theorem}\label{thm:main1} Suppose that the assumptions (H1) and (H2) are satisfied. 
	For $\lambda >0$, we have 
	$$
-	\frac{\psi^*(\max \partial \psi(\lambda))}{\log q} \le     \dim E(\partial \psi(\lambda)) \le {\rm Dim } E(\partial \psi(\lambda)) \le - \frac{\psi^*(\min \partial \psi(\lambda))}{\log q}.
	$$  
	For $\lambda<0$, we have similar estimates but we have to exchange the roles of  $\min \partial \psi(\lambda)$  and $\max \partial \psi(\lambda)$.
\end{theorem}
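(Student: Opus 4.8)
The plan is to derive the upper estimate for the packing dimension from a counting/covering argument, and the lower estimate for the Hausdorff dimension from a tilted measure together with the mass distribution principle. Write $[a,b]:=\partial\psi(\lambda)$, so that $a=\min\partial\psi(\lambda)$ and $b=\max\partial\psi(\lambda)$, and recall that for $d\in\partial\psi(\lambda)$ one has the Young equality $\psi^*(d)=\lambda d-\psi(\lambda)$; for $\lambda>0$ this gives $\psi^*(a)\le\psi^*(b)$, consistent with the asserted sandwich. The bridge between the convex datum $\psi$ and the geometry of $X$ is the comparison
$$\sum_{C_n}\sup_{x\in C_n}e^{\lambda S_nf(x)}\ \asymp\ q^nZ_n(\lambda)=e^{n\psi(\lambda)+o(n)},$$
which follows from (H1) and the bounded-distortion hypothesis (H2): by (H2) the sum $S_nf$ oscillates by at most a fixed constant on every cylinder $C_n$ of length $n$, so the supremum, the infimum and the $\sigma$-average of $e^{\lambda S_nf}$ over $C_n$ are comparable uniformly in $n$ and $C_n$.

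For the upper bound, Markov's inequality applied to the last comparison yields, for every $t>0$ and $\gamma\in\R$,
$$\#\{C_n:\sup_{x\in C_n}S_nf(x)\ge\gamma n\}\ \le\ e^{-t\gamma n}\sum_{C_n}\sup_{x\in C_n}e^{tS_nf(x)}=e^{n(\psi(t)-t\gamma)+o(n)}.$$
Since $\underline A(x)\ge a$ forces $S_nf(x)\ge(a-\epsilon)n$ for all large $n$, I would write $\{\underline A\ge a\}\subseteq\bigcup_N E_N^\epsilon$ with $E_N^\epsilon:=\{x:S_nf(x)\ge(a-\epsilon)n\ \text{for all}\ n\ge N\}$, and cover $E_N^\epsilon$, at each scale $n\ge N$, by the length-$n$ cylinders it meets; each has diameter $q^{-n}$ and satisfies $\sup_{C_n}S_nf\ge(a-\epsilon)n$. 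Optimising the counting bound over $t>0$ (the minimiser is positive once $a-\epsilon>\max\partial\psi(0)$, so the constrained infimum equals the unconstrained value $-\psi^*(a-\epsilon)$) gives $\overline{\dim}_BE_N^\epsilon\le-\psi^*(a-\epsilon)/\log q$ for every $N$. Since the packing dimension is countably stable and $\psi^*$ is continuous, letting $\epsilon\to0$ yields ${\rm Dim}\,E([a,b])\le-\psi^*(a)/\log q=-\psi^*(\min\partial\psi(\lambda))/\log q$.

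For the lower bound I would construct, along a sequence of scales $n_1<n_2<\cdots$ with $n_{k+1}/n_k\to1$, a probability measure $\mu$ by a Moran-type construction that splits mass block by block in proportion to the tilted weights $\sup_Ce^{\lambda Sf}$ of the block. Here (H2) is what makes consecutive blocks essentially decouple, so that (H1) delivers the two-sided estimate $\mu(C_{n_k}(x))\asymp e^{\lambda S_{n_k}f(x)-n_k\psi(\lambda)}$ with constants uniform in $k$ and $x$. As $\|f_n\|_\infty=O(1)$ and $n_{k+1}/n_k\to1$, this propagates to all scales $n$, giving $\frac{\log\mu(C_n(x))}{-n\log q}=\frac{\psi(\lambda)-\lambda\,S_nf(x)/n}{\log q}+o(1)$. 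A Chernoff estimate under $\mu$ then confines the averages: for $\epsilon>0$ and small $t>0$,
$$\mu(S_nf>(b+\epsilon)n)\ \le\ e^{n(\psi(\lambda+t)-\psi(\lambda)-t(b+\epsilon))+o(n)},$$
and the exponent is negative because $b=\psi'_+(\lambda)$ forces $\psi(\lambda+t)-\psi(\lambda)\le t(b+o(1))$ as $t\to0^+$; the symmetric choice $t<0$ with $a=\psi'_-(\lambda)$ controls the lower tail. By Borel--Cantelli one then gets $a\le\underline A(x)\le\overline A(x)\le b$ for $\mu$-a.e.\ $x$, so $\mu$ is carried by $E([a,b])$.

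Finally, for $\lambda>0$ the local-dimension identity above gives, for $\mu$-a.e.\ $x$,
$$\liminf_{n\to\infty}\frac{\log\mu(C_n(x))}{-n\log q}=\frac{\psi(\lambda)-\lambda\overline A(x)}{\log q}\ \ge\ \frac{\psi(\lambda)-\lambda b}{\log q}=-\frac{\psi^*(b)}{\log q},$$
and the mass distribution principle (Billingsley's lemma) upgrades this to $\dim E([a,b])\ge-\psi^*(b)/\log q=-\psi^*(\max\partial\psi(\lambda))/\log q$. The case $\lambda<0$ is entirely parallel: changing the sign of $\lambda$ reverses the direction of the Chernoff inequalities and of the monotonicity in the local-dimension computation, which is precisely what interchanges the roles of $\min\partial\psi(\lambda)$ and $\max\partial\psi(\lambda)$. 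I expect the genuine obstacle to be the construction of $\mu$ and the verification of the uniform two-sided bound $\mu(C_{n_k}(x))\asymp e^{\lambda S_{n_k}f(x)-n_k\psi(\lambda)}$: this is the point where the non-stationarity of $(f_n)$ must be absorbed by (H2), and where care is needed to ensure that the local-dimension estimate holds at every scale $n$ and not merely along the $n_k$.
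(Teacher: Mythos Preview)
Your upper bound is essentially the paper's argument: both use $\{\underline A\ge a\}\subset\bigcup_N\bigcap_{n\ge N}\{S_nf\ge(a-\epsilon)n\}$, the $\sigma$-stability of packing dimension, and a Chebyshev/Markov count of the covering $n$-cylinders. The only cosmetic difference is that you optimise over the tilt parameter $t>0$ whereas the paper simply plugs in $t=\lambda$; since $a=\min\partial\psi(\lambda)$ makes $\lambda$ a minimiser of $t\mapsto\psi(t)-ta$, the two give the same bound $-\psi^*(a)/\log q$.

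For the lower bound your strategy is sound but the paper's execution is cleaner and sidesteps precisely the obstacle you flag. Instead of a Moran construction along a sparse sequence $n_1<n_2<\cdots$, the paper takes any weak limit $\mu_\lambda$ of the tilted probabilities $d\mu_{n,\lambda}=P_n^\lambda\,dx/Z_n(\lambda)$ and proves a \emph{fundamental inequality}
\[
C(\lambda)^{-1}\le \frac{Z_{l,n}(\lambda)}{Z_{l,m}(\lambda)\,Z_{m,n}(\lambda)}\le C(\lambda)\qquad(0\le l\le m\le n),
\]
which is the precise quantitative form of the ``block decoupling'' you invoke. This two-sided approximate multiplicativity, together with (H2), yields the Gibbs property $\mu_\lambda([x_0,\dots,x_{n-1}])\asymp P_n^\lambda(x)/(q^nZ_n(\lambda))$ directly at \emph{every} scale $n$, with no subsequence and no propagation argument needed. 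From there the paper proceeds exactly as you do: the free energy of $S_nf$ under $\mu_\lambda$ is $c_\lambda(\beta)=\phi(\lambda+\beta)-\phi(\lambda)$, a large-deviation/Borel--Cantelli argument gives $a\le\underline A\le\overline A\le b$ $\mu_\lambda$-a.e., and the Gibbs property turns this into the local-dimension lower bound $\underline D(\mu_\lambda,x)\ge(\psi(\lambda)-\lambda b)/\log q=-\psi^*(b)/\log q$, whence the Hausdorff bound via Billingsley. So your outline is correct; what the paper buys by passing through weak limits and the fundamental inequality is that the uniform two-sided estimate you single out as the ``genuine obstacle'' comes for free, without any scale-selection machinery.
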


The ideal case is when $\psi$ is differentiable. Then 
$\partial \psi(\lambda)$ reduces to a singleton and we get 
equalities instead of inequalities in Theorem \ref{thm:main1}.
In other words, for $\alpha =\psi'(\lambda)$ we have
\begin{equation}\label{eq:spectrum}
    \dim E(\alpha) = {\rm Dim } E(\alpha) = - \frac{\psi^*(\alpha)}{\log q} = \frac{\psi(\lambda)-\lambda \alpha}{\log q}.
\end{equation}
Thus, a natural problem is to prove the differentiability
of $\psi$ in concrete cases. We will prove this in  some  cases. 
\medskip

Let us apply Theorem \ref{thm:main1}  to   
$f_n(x)= w_n f(T^n x)$ where  the weights $(w_n)$ are dynamically defined. 
We say that $f$ is  of {\bf  bounded variation}, if  $\sum_{n=1}^\infty {\rm var}_n(f)<\infty$ with
$$
    {\rm var}_n(f) := \sup_{x_0=y_0, \cdots, x_{n-1}=y_{n-1}}|f(x) -f(y)|.
$$ 

First we apply Theorem \ref{thm:main1}   to the case of ergodic random stationary
weights. Especially in the case that $f$ depends only on a finite number of coordinates, the pressure function be will proved to be analytic.


\begin{theorem}\label{thm:random} Consider the case 
	$f_n(x) = \omega_n f(T^nx)$. Suppose that $(\omega_n)$
	is an ergodic sequence of real random variables with 
	$\|\omega_n\|_\infty=O(1)$ and that $f$ 
	is of bounded variation. Then \\
	\indent {\rm (a)} almost surely
	the assumptions (H1) and (H2) are satisfied and 
	the function $\phi$ is independent of $\omega$;\\ 
	\indent {\rm (b)} if, furthermore, $f$ depends only on a finite number of coordinates, then  $\phi$ is an analytic function of $\lambda \in \mathbb{R}$. 
\end{theorem}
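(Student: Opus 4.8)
The plan is to verify (H2) by a direct estimate, to obtain (H1) together with the $\omega$-independence of $\phi$ from Kingman's subadditive ergodic theorem, and to deduce the analyticity in part (b) by reducing $\phi$ to the top Lyapunov exponent of a random product of positive matrices.

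First I would dispose of (H2), which is immediate. If $x$ and $y$ agree on the first $N$ coordinates, then $T^kx$ and $T^ky$ agree on the first $N-k$ coordinates, so $|f_k(T^kx)-f_k(T^ky)| = |\omega_k|\,|f(T^kx)-f(T^ky)| \le (\sup_n\|\omega_n\|_\infty)\,{\rm var}_{N-k}(f)$. Summing over $k$ and reindexing by $j=N-k$ gives $\sum_{k=0}^{N-1}|f_k(T^kx)-f_k(T^ky)| \le (\sup_n\|\omega_n\|_\infty)\sum_{j\ge 1}{\rm var}_j(f)$, which is finite and uniform in $N$ by the bounded-variation hypothesis on $f$ and the bound $\|\omega_n\|_\infty=O(1)$; hence (H2) holds surely.

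For (H1), write $g_n(x,\omega)=\sum_{k=0}^{n-1}\omega_kf(T^kx)$, so that $Z_n(\lambda,\omega)=\mathbb{E}_\sigma e^{\lambda g_n(\cdot,\omega)}$, and let $\theta$ be the shift on the weight sequence, which is measure-preserving and ergodic by hypothesis. The cocycle identity $g_{m+n}(x,\omega)=g_m(x,\omega)+g_n(T^mx,\theta^m\omega)$ suggests subadditivity, the only obstacle being that $g_m(\cdot,\omega)$ is not a function of $x_0,\dots,x_{m-1}$ alone. Here the bounded variation of $f$ saves the day: changing the tail $x_m,x_{m+1},\dots$ alters $g_m(\cdot,\omega)$ by at most $C_0:=(\sup_n\|\omega_n\|_\infty)\sum_{j\ge 1}{\rm var}_j(f)$, so $g_m$ may be replaced, up to bounded multiplicative factors $e^{|\lambda|C_0}$, by a function $\hat g_m$ of the first $m$ coordinates only. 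Since $\sigma$ is a product measure, $\hat g_m$ and $g_n(T^m\cdot,\theta^m\omega)$ then depend on independent blocks of coordinates, and $T$-invariance of $\sigma$ identifies the resulting second factor as $Z_n(\lambda,\theta^m\omega)$. This yields
\[
\log Z_{m+n}(\lambda,\omega)\le \log Z_m(\lambda,\omega)+\log Z_n(\lambda,\theta^m\omega)+2|\lambda|C_0 .
\]
Because $|\log Z_n(\lambda,\omega)|\le |\lambda|\, n\,(\sup_n\|\omega_n\|_\infty)\,\|f\|_\infty$ is integrable (the upper bound is trivial, the lower one follows from Jensen's inequality), Kingman's subadditive ergodic theorem gives, for each fixed $\lambda$, almost sure and $L^1$ convergence of $\frac1n\log Z_n(\lambda,\omega)$ to a $\theta$-invariant limit, which ergodicity forces to be the constant $\phi(\lambda)=\lim_n\frac1n\mathbb{E}_\omega\log Z_n(\lambda,\omega)$; this is exactly (H1) together with the asserted $\omega$-independence. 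To upgrade from fixed $\lambda$ to all $\lambda$ simultaneously I would run this for $\lambda\in\mathbb{Q}$ and then invoke convexity of $\lambda\mapsto\frac1n\log Z_n(\lambda,\omega)$, since pointwise convergence on a dense set propagates to the whole line for convex functions.

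For part (b), suppose $f=f(x_0,\dots,x_{L-1})$. Then each window $\omega_kf(x_k,\dots,x_{k+L-1})$ depends on $L$ consecutive symbols, so $Z_n(\lambda,\omega)$ can be written, up to boundary terms that do not affect the exponential rate, as a product $M(\omega_0,\lambda)\cdots M(\omega_{n-1},\lambda)$ of non-negative $q^{L-1}\times q^{L-1}$ transfer matrices over the state space $S^{L-1}$, each entry carrying the factor $q^{-1}e^{\lambda\omega_kf(\cdots)}$ together with the de Bruijn compatibility constraint. Consequently $\phi(\lambda)$ is the top Lyapunov exponent of this random matrix cocycle driven by the ergodic sequence $(\omega_k)$. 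The decisive structural fact is that, the full shift being topologically mixing, any product of $L-1$ consecutive such matrices is strictly positive with entries bounded above and below uniformly in $\omega$ (as $f$ and $\omega$ are bounded); this uniform positivity yields a uniform contraction in the Hilbert projective metric, an analytically varying stationary measure on the cone, and hence analyticity of the exponent in $\lambda$. I expect this last point, the analyticity of the quenched Lyapunov exponent, to be \emph{the main obstacle}, since Lyapunov exponents are in general only as regular as the underlying stationary measure permits; the clean case $L=1$ (where the $x_k$ are independent and $\phi(\lambda)=\mathbb{E}_{\omega_0}\log(q^{-1}\sum_{s\in S}e^{\lambda\omega_0f(s)})$ is visibly analytic by differentiation under the integral sign) is a useful sanity check and the template, with the transfer-matrix and positive-cone machinery — in the spirit of Ruelle's analyticity theorem for random products of positive matrices — supplying the general argument.
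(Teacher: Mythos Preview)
Your proposal is correct and follows essentially the same route as the paper: (H2) by the direct bounded-variation estimate, (H1) by Kingman's subadditive ergodic theorem (the paper obtains the needed subadditivity from its fundamental inequalities, Lemma~\ref{lem:ZZZ}, which is exactly the bounded-distortion argument you write out), and part~(b) by expressing $Z_n(\lambda,\omega)$ as a norm of a product of $q^{L-1}\times q^{L-1}$ transfer matrices whose $L$-fold products are strictly positive, then invoking Ruelle's analyticity theorem for the top Lyapunov exponent of random positive matrices. Your extra care in passing from rational $\lambda$ to all $\lambda$ via convexity is a detail the paper omits.
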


As we will see, the first assertion of Theorem \ref{thm:random} follows from Kingman's subadditive ergodic theorem and the second assertion  follows from Ruelle's theorem \cite{Ruelle1979}. If $f$ depends only on a finite number of coordinates and if $(\omega_n)$ is a sequence of independent and identically distributed random variable taking a finite number of values, Pollicott's method in \cite{Pollicott2010} can allow us to  numerically compute $\psi$
and then to numerically find the multifractal spectrum presented by the formula (\ref{eq:spectrum}). 
\medskip

If the weight $(w_n)$ is realized by a uniquely ergodic dynamical system, it is natural to ask if the pressure exists for every 
such realization. The answer is confirmative under the extra
condition that $f$ depends only only a finite number of coordinates. The problem is actually converted to the existence of  maximum Liapounov exponent of matrix-valued cocycles.

\begin{theorem}\label{thm:UE} Consider the case 
	$f_n(x) = \phi(\Theta^n \omega) f(T^nx)$, where $\Theta: \Omega \to \Omega$ is a uniquely ergodic dynamical system and $\phi \in C(\Omega)$ is a continuous function and 
	$f\in C(X)$ 
	is a function depending only on a finite number of coordinates. 
	Then for every $\omega\in \Omega$,  the pressure function $\phi$
	is well defined and  independent of $\omega$, and is an analytic function of $\lambda$.
\end{theorem}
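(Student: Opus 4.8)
The plan is to convert the existence and regularity of the pressure into a statement about the top Lyapunov exponent of a positive matrix cocycle over $(\Omega,\Theta)$, then to exploit unique ergodicity (for existence at \emph{every} $\omega$) and strict positivity (for independence of $\omega$ and for analyticity). To avoid the clash of notation in the statement, write $g:=\phi\in C(\Omega)$ for the weight-generating function, so $f_n(x)=g(\Theta^n\omega)f(T^nx)$, and what must be shown is that the pressure $\phi(\lambda)=\lim_n\tfrac1n\log Z_n(\lambda)$ exists for every $\omega$, is independent of $\omega$, and is analytic. Assumption (H2) is immediate here: since $f$ depends on finitely many coordinates it has eventually vanishing variation, and the weights $g(\Theta^n\omega)$ are uniformly bounded because $\Omega$ is compact and $g$ continuous. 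The content is therefore (H1) together with analyticity.

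First I would set up the transfer matrix. Suppose $f$ depends on the first $m$ coordinates. Coding blocks of length $m-1$ as the states of the full shift, one writes, for fixed $\omega$ and up to a multiplicative factor bounded independently of $N$,
\[
Z_N(\lambda)\ \asymp\ \big\langle \mathbf 1,\ A_\lambda(\Theta^{N-1}\omega)\cdots A_\lambda(\Theta\omega)\,A_\lambda(\omega)\,\mathbf 1\big\rangle ,
\]
where $A_\lambda(\omega)$ is the nonnegative $q^{m-1}\times q^{m-1}$ matrix whose allowed entries (those joining overlapping blocks) equal $q^{-1}\exp(\lambda\,g(\omega)\,f(\cdot))$ and whose remaining entries vanish. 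For the full shift this transition pattern is primitive of index $m-1$, so any product of $m-1$ consecutive factors has strictly positive entries, uniformly bounded above and below for fixed $\lambda$. Thus $\phi(\lambda)=\lim_N\tfrac1N\log\|A_\lambda(\Theta^{N-1}\omega)\cdots A_\lambda(\omega)\|$ is exactly the top Lyapunov exponent of the continuous, (eventually) uniformly positive matrix cocycle generated by $A_\lambda$ over $(\Omega,\Theta)$.

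Next I would prove existence at every $\omega$ and independence of $\omega$. The sequence $a_N(\omega):=\log\|A_\lambda(\Theta^{N-1}\omega)\cdots A_\lambda(\omega)\|$ is subadditive and consists of continuous functions, so Kingman applied to the unique invariant measure $\nu$ gives $\tfrac1N a_N\to\phi(\lambda)=\lim_N\tfrac1N\int a_N\,d\nu$ for $\nu$-a.e. $\omega$. Over a uniquely ergodic base one also gets the \emph{uniform} upper bound $\limsup_N\tfrac1N a_N(\omega)\le\phi(\lambda)$, by dominating $a_N$ through subadditivity by Birkhoff sums of a fixed $a_M$ and using uniform convergence of those averages. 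The matching lower bound is the crux, and here strict positivity is decisive: uniformly positive matrices contract the Hilbert projective metric of the positive cone by a factor $\tau<1$, which yields bounded distortion, i.e.\ $|a_{N+M}(\omega)-a_N(\Theta^M\omega)-a_M(\omega)|\le C_0$ with $C_0$ independent of $N,M,\omega$. Consequently $a_N-C_0$ is superadditive, and the superadditive analogue of the above argument gives $\liminf_N\tfrac1N a_N(\omega)\ge\phi(\lambda)$ uniformly. Hence $\phi(\lambda)$ exists at every $\omega$ and equals the $\omega$-independent constant $\lim_N\tfrac1N\int a_N\,d\nu$, establishing (H1) and independence; this is the step where the hypothesis that $f$ depends on only finitely many coordinates is indispensable.

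Finally, for analyticity in $\lambda$, each entry of $A_\lambda(\omega)$ is entire in $\lambda$, and for real $\lambda$ the (grouped) cocycle is uniformly cone-preserving with a uniform Hilbert-metric contraction. This is precisely the framework of Ruelle's theorem on the analyticity of characteristic exponents of cone-preserving matrix products \cite{Ruelle1979}, already invoked for Theorem \ref{thm:random}; applying it to the analytic family $\lambda\mapsto A_\lambda$ shows that $\phi(\lambda)$, hence $\psi(\lambda)=\phi(\lambda)+\log q$, is real-analytic. I expect the principal obstacle to be the lower bound of the third paragraph, since a subadditive cocycle over a uniquely ergodic system need not converge at every point; it is resolved only by extracting the bounded-distortion estimate from the positivity of the transfer matrices. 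A secondary technical point is verifying that Ruelle's cone and analyticity hypotheses hold uniformly in $\omega$, which again follows from the uniform positivity established above.
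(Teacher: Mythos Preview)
Your proposal is correct and follows essentially the same route as the paper: reduce the pressure to the top Lyapunov exponent of an eventually positive continuous matrix cocycle over $(\Omega,\Theta)$, use the Hilbert-metric contraction coming from strict positivity to upgrade a.e.\ subadditive convergence to convergence at \emph{every} $\omega$, and invoke Ruelle's theorem for analyticity. The only difference is packaging: the paper quotes Furman's theorem (patched to drop invertibility), which constructs a continuous equivariant direction $\overline{u}(\omega)$ in the positive cone so that $\log\|A_\lambda(\omega)\overline{u}(\omega)\|$ becomes an honest Birkhoff cocycle and the uniform ergodic theorem applies, whereas you extract the bounded-distortion/almost-additivity estimate directly; both arguments rest on the same positivity/contraction mechanism.
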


This follows essentially from a result due to Furman \cite{Furman1997}
and from Theorem \ref{thm:random}.
\medskip

Now assume that $(w_n)$ is a sequence taking a finite number of real values, say $A \subset \mathbb{R}$. The shift $\Theta: A^\infty \to A^\infty$ acts on the  closed orbit
$\overline{\{\Theta^k w\}}$. We will assume that the subsystem
$(\overline{\{\Theta^k w\}}, \Theta)$ is minimal and uniquely ergodic (then we will simply say that $w$ is minimal and uniquely ergodic).  Then the condition imposed in Theorem \ref{thm:UE} that $f$ depends on the first coordinates can be dropped for the function $\phi$ to be well defined. 
Let us point out that all primitive substitutive sequences are minimal and uniquely ergodic \cite{Michel1974}.

\begin{theorem}\label{thm:main2} Consider 
	$f_n(x) = w_n f(T^nx)$. Suppose that $(w_n)\in A^\infty$
	is minimal and uniquely ergodic  and that $f$ 
	is of bounded variation. Then 
	the assumptions (H1) and (H2) are satisfied. 
\end{theorem}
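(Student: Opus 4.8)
The plan is to establish (H1) by showing that, as a function of the weight sequence $\omega$ ranging over the orbit closure $\Omega:=\overline{\{\Theta^k w\}}$, the quantity
$b_n(\omega):=\log\int_X \exp\!\big(\lambda\textstyle\sum_{k=0}^{n-1}\omega_k f(T^kx)\big)\,d\sigma(x)$
is an \emph{almost additive} cocycle over the uniquely ergodic system $(\Omega,\Theta)$, and then to invoke the uniform ergodic theorem of Furman \cite{Furman1997} in both directions. Condition (H2) is routine: with $f_k=w_kf$ one has $|f_k(T^kx)-f_k(T^ky)|=|w_k|\,|f(T^kx)-f(T^ky)|$, and if $x,y$ agree on the first $N$ coordinates then $T^kx$ and $T^ky$ agree on their first $N-k$ coordinates, so $|f(T^kx)-f(T^ky)|\le {\rm var}_{N-k}(f)$. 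Since $|w_k|\le\|w\|_\infty<\infty$ (the alphabet $A$ being finite), summing gives $\sum_{k=0}^{N-1}|f_k(T^kx)-f_k(T^ky)|\le \|w\|_\infty\sum_{m\ge1}{\rm var}_m(f)$, which is finite and uniform in $N$ because $f$ has bounded variation; this is (H2).

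For (H1) the crucial step is almost additivity. Splitting the sum defining $b_{m+n}(\omega)$ at index $m$ and setting $h(x)=\exp(\lambda\sum_{k=0}^{m-1}\omega_kf(T^kx))$ and $g(x)=\exp(\lambda\sum_{j=0}^{n-1}(\Theta^m\omega)_jf(T^jx))$, one finds $e^{b_{m+n}(\omega)}=\int h\,(g\circ T^m)\,d\sigma=\int (\mathcal L^mh)\,g\,d\sigma$, where $\mathcal L$ is the transfer operator of $(T,\sigma)$, so that $\mathcal L^mh(x)=q^{-m}\sum_{u\in S^m}h(ux)$. Here the bounded variation of $f$ forces $\mathcal L^mh$ to be almost constant: for each $u$ and all $x,x'$, the exponents of $h(ux)$ and $h(ux')$ differ by at most $C:=|\lambda|\,\|w\|_\infty\sum_{m\ge1}{\rm var}_m(f)$, whence $e^{-C}\le \mathcal L^mh(x)/\mathcal L^mh(x')\le e^{C}$. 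Since $\int \mathcal L^mh\,d\sigma=\int h\,d\sigma=e^{b_m(\omega)}$, this gives the pointwise bounds $e^{-C}e^{b_m(\omega)}\le \mathcal L^mh\le e^{C}e^{b_m(\omega)}$, and integrating against $g\ge0$ and taking logarithms yields $|b_{m+n}(\omega)-b_m(\omega)-b_n(\Theta^m\omega)|\le C$ for all $m,n$ and all $\omega\in\Omega$.

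Consequently $\tilde b_n:=b_n+C$ is subadditive and $\hat b_n:=b_n-C$ is superadditive over $(\Omega,\Theta)$, and both are continuous (indeed locally constant, as $A$ is finite and $b_n$ depends only on $\omega_0,\dots,\omega_{n-1}$). Let $\nu$ be the unique $\Theta$-invariant measure. Furman's theorem \cite{Furman1997} applied to $\tilde b_n$ gives $\limsup_n\frac1n b_n(\omega)\le \lim_n\frac1n\int b_n\,d\nu$ uniformly in $\omega$, and applied to the subadditive sequence $-\hat b_n$ gives $\liminf_n\frac1n b_n(\omega)\ge \lim_n\frac1n\int b_n\,d\nu$ uniformly in $\omega$; the two limits of integrals coincide because $\tilde b_n-\hat b_n=2C$. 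Hence $\frac1n b_n(\omega)$ converges uniformly on $\Omega$ to $\beta:=\lim_n\frac1n\int b_n\,d\nu$. Specializing to $\omega=w$ and noting $b_n(w)=\log Z_n(\lambda)$, we conclude that $\phi(\lambda)=\lim_n\frac1n\log Z_n(\lambda)=\beta$ exists for every $\lambda\in\R$, which is (H1).

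The main obstacle is precisely the passage from integral (or $\nu$-a.e.) convergence to convergence at the single prescribed point $w$: for a merely subadditive cocycle over a uniquely ergodic system the lower limit can fail at individual points, so Furman's one-sided bound alone is insufficient. What rescues the argument is that bounded variation controls the additivity defect in \emph{both} directions, making $b_n$ almost additive and thereby allowing Furman's uniform upper estimate to be applied twice, to $\tilde b_n$ and to $-\hat b_n$, squeezing the limit and pinning it at every point of $\Omega$, in particular at $w$.
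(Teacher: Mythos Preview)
Your argument is correct, and it takes a genuinely different route from the paper's own proof.

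The paper argues combinatorially via return words: fixing a prefix $u$ of length $n$, it decomposes $w=v_1v_2\cdots$ into return words $v_i\in\mathcal{R}_u(w)$, uses the fundamental inequalities (Lemma~\ref{lem:ZZZ}) to control $\log Z_{v_1\cdots v_k}/|v_1\cdots v_k|$ by a weighted average $A_u$ of the finitely many $\log Z_v$ up to an error $O((\log C)/n)$, and invokes unique ergodicity to guarantee that the frequencies $p_v$ defining $A_u$ exist. Letting $n\to\infty$ squeezes $\overline{\phi}-\underline{\phi}$ to zero. This uses minimality (for the finiteness of $\mathcal{R}_u(w)$) as well as unique ergodicity, and yields as a by-product the explicit approximation formula~(\ref{eq:phi-A}).

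You instead recognise that $b_n(\omega)=\log Z_n(\lambda,\omega)$ is an \emph{almost additive} continuous cocycle over the uniquely ergodic system $(\Omega,\Theta)$---a statement equivalent, after the identification $Z_{m,n}(\lambda)=e^{b_{n-m}(\Theta^m w)}$, to the $l=0$ case of Lemma~\ref{lem:ZZZ}. Turning almost additivity into a pair of genuine sub- and superadditive cocycles $b_n\pm C$ and applying Furman's uniform one-sided bound twice is exactly the right way to upgrade Kingman's a.e.\ statement to everywhere convergence, and your identification of this as the main obstacle is on point. Your proof is shorter, does not invoke minimality explicitly, and in fact establishes uniform convergence of $\tfrac{1}{n}\log Z_n(\lambda,\omega)$ over all of $\Omega$, not just at $w$. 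What the paper's approach buys in exchange is the concrete rate and the return-word approximation~(\ref{eq:phi-A}), which is useful if one wants to compute $\phi$ in examples.
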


The proof of Theorem \ref{thm:main2} is based on the notion of return word (see \cite{Durand1998}). 
\medskip

\medskip

Now let us look at some particular cases for which we can find an explicit formula for  the function $\phi$ and then an explicit formula for the spectrum given by  (\ref{eq:spectrum}).
For simplicity, just consider the case $S=\{-1, 1\}$. 
A typical example of $f_n(x_n, x_{n+1}, \cdots )$ is of the form
\begin{equation}\label{Example}
      w_n (a+b x_n + c x_{n+1} + d x_n x_{n+1} + e x_n x_{2n} + f x_n x_{2n} x_{3n})
\end{equation} 
where $a,b,c,d, e,f$ are fixed constants. Special cases include
\begin{eqnarray}
	  f_n(x_n, x_{n+1}, \cdots) &=& w_n x_nx_{n+1}, \label{Case 1}\\
	  f_n(x_n, x_{n+1}, \cdots) &=& w_n x_nx_{2n},\label{Case 2} \\
	  f_n(x_n, x_{n+1}, \cdots) &=& w_n x_nx_{2n} x_{3n}. \label{Case 3} 
	\end{eqnarray}

When $w_n=1$ for all $n$, the first  case defined by (\ref{Case 1})   is  classical and well studied (for example, see \cite{FF, FFW}),  and the  cases defined by (\ref{Case 2}) and (\ref{Case 3}) are
 studied in \cite{FLM} using Riesz product measures. 
 Notice that we can not apply Theorem \ref{thm:main1} to the last two cases because the assumption (H2) is not satisfied.

In the following we discuss the case $f_n(x) = w_n x_n x_{n+1}$ with
some more or less regular weights $(w_n)$. By the way,  we will also discuss some   
generalizations of $w_n x_n x_{n+1}$. An explicit formula for $\phi$
will be obtained.
As a corollary of Theorem \ref{thm:main1}, we can then prove the following result. But we  can and we will provide a direct proof of the following theorem too. 

\begin{theorem}\label{thm1} Let $S=\{-1, 1\}$.
	Assume that $(w_n)$ takes a finite number of values $v_0, v_1, \cdots, v_m$ and that $(f_n)$ are of the form
	$$
	f_n(x) = x_n g_n(x_{n+1}, x_{n+2}, \cdots).
	$$
	 Suppose further that \\
	\indent {\rm (C1)}  all $g_n$ take   
	values in $\{-1, 1\}$ and there is an integer $L\ge 1$ such that
	$g_n(x_n, \cdots)$ depends only on $x_n, x_{n+1}, \cdots, x_{n+L}$; \\ 
	\indent {\rm (C2)} 
	the following frequencies exist
	\begin{equation}\label{eqn:freq}
	p_j := \lim_{N\to\infty} \frac{\#\{1\le n \le N: w_n=v_j\}}{N}
	\quad (0\le j \le m).
	\end{equation}
	Then for $\alpha \in (-\sum p_j |v_j|,  \sum p_j |v_j|)$ we have
	$$
	\dim E(\alpha) =  
	\frac{1}{\log 2}\sum_{j=0}^m p_j \left(\log (e^{\lambda_\alpha v_j} + e^{-\lambda_\alpha v_j})
	-  \lambda_\alpha v_j \frac{e^{\lambda_\alpha v_j}- e^{-\lambda_\alpha v_j}}{e^{\lambda_\alpha v_j}+e^{-\lambda_\alpha v_j}}\right)
	$$
	where $\lambda_\alpha$ is the unique solution of the equation
	$$
	\sum_{j=0}^m p_j v_j \frac{e^{\lambda_\alpha v_j}- e^{-\lambda_\alpha v_j}}{e^{\lambda_\alpha v_j}+e^{-\lambda_\alpha v_j}} = \alpha.
	$$
	\end{theorem}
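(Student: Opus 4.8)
The heart of the matter is an exact evaluation of the partition function $Z_N(\lambda) = \mathbb{E}\, P_N^\lambda$, after which everything is reduced to Theorem \ref{thm:main1}. The plan is to exploit the very special structure of the weighted functions $f_n(x) = w_n x_n g_n(x_{n+1},\ldots,x_{n+L})$, where $g_n$ is $\{-1,1\}$-valued by (C1). The crucial observation is that each coordinate $x_n$ enters the sum $\sum_{k=0}^{N-1} f_k(T^k x)$ as an \emph{explicit} multiplicative factor only through the single term $k=n$; it may also appear inside $g_k$ for indices $k\in\{n-L,\ldots,n-1\}$, but never as an explicit factor there, and it appears in no $g_k$ with $k\ge n$. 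I would therefore integrate the coordinates $x_0,x_1,x_2,\ldots$ successively in increasing order. When integrating $x_n$, all terms of index $<n$ have already been replaced by constants, so the only surviving dependence on $x_n$ is the factor $\exp(\lambda w_n x_n g_n(\cdot))$, in which $g_n(\cdot)=\pm1$ is frozen. Since $\tfrac12\sum_{x_n=\pm1}\exp(\lambda w_n x_n g_n) = \cosh(\lambda w_n g_n) = \cosh(\lambda w_n)$ by the evenness of $\cosh$, a finite Fubini argument collapses everything to the product formula $Z_N(\lambda)=\prod_{n=0}^{N-1}\cosh(\lambda w_n)$.

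From here the hypotheses are immediate. Grouping indices according to the value $v_j$ of $w_n$ and using (C2), I get $\phi(\lambda)=\lim_N \tfrac1N\sum_{n=0}^{N-1}\log\cosh(\lambda w_n)=\sum_{j=0}^m p_j\log\cosh(\lambda v_j)$, so (H1) holds. Condition (H2) follows at once from the finite range $L$: if $x$ and $y$ agree on the first $N$ coordinates, then $f_k(T^k x)=f_k(T^k y)$ for every $k\le N-1-L$, so the sum in (H2) has at most $L$ nonzero terms, each bounded by $2\max_j|v_j|$, whence it is bounded by $2L\max_j|v_j|$ uniformly in $N$. Consequently $\psi(\lambda)=\phi(\lambda)+\log 2=\sum_j p_j\log(e^{\lambda v_j}+e^{-\lambda v_j})$ is a finite sum of real-analytic functions, hence real-analytic and in particular differentiable on all of $\mathbb{R}$. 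Thus $\partial\psi(\lambda)$ is the singleton $\{\psi'(\lambda)\}$, and Theorem \ref{thm:main1} upgrades to the equality (\ref{eq:spectrum}).

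It then remains to match parameters. Differentiating gives $\psi'(\lambda)=\sum_j p_j v_j\tanh(\lambda v_j)$ and $\psi''(\lambda)=\sum_j p_j v_j^2\,\mathrm{sech}^2(\lambda v_j)>0$ whenever some $v_j\neq 0$, so $\psi$ is strictly convex and $\psi'$ is a strictly increasing bijection of $\mathbb{R}$ onto $(-\sum_j p_j|v_j|,\ \sum_j p_j|v_j|)$, its endpoints being the limits of $\psi'$ at $\mp\infty$. Hence for each $\alpha$ in this open interval there is a unique $\lambda_\alpha$ solving $\psi'(\lambda_\alpha)=\alpha$, which is precisely the stated equation, and $E(\psi'(\lambda_\alpha))=E(\alpha)$. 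Substituting $\alpha=\sum_j p_j v_j\tanh(\lambda_\alpha v_j)$ into (\ref{eq:spectrum}) turns $\dim E(\alpha)=(\psi(\lambda_\alpha)-\lambda_\alpha\alpha)/\log 2$ into the announced formula, after writing $\tanh(\lambda v)=(e^{\lambda v}-e^{-\lambda v})/(e^{\lambda v}+e^{-\lambda v})$ and $\log q=\log 2$.

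The value $\lambda_\alpha=0$, i.e. $\alpha=0$, is the one point at which Theorem \ref{thm:main1} (stated for $\lambda\neq 0$) does not apply directly; there I would argue separately, noting that $\psi'(0)=0$ and that the strong law of large numbers for the $L$-dependent, mean-zero, bounded sequence $(f_n(T^n\cdot))$ gives $\mathbb{E}$-full measure to $E(0)$, whence $\dim E(0)=1=\psi(0)/\log 2$, in agreement with the formula. I expect the genuine obstacle to be none of this bookkeeping but the very first step: recognizing and justifying the successive-integration collapse $Z_N(\lambda)=\prod_n\cosh(\lambda w_n)$, which hinges on the two features of (C1) that $g_n$ is $\{-1,1\}$-valued (so that $\cosh$ appears) and does not involve $x_n$ itself (so that no coupling survives). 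A self-contained direct proof is also available, building the Gibbs measure $\mu_{\lambda_\alpha}$ from the same product structure and computing its local dimension via Billingsley's lemma; there the finite-range dependence of $g_n$ keeps $\mu_{\lambda_\alpha}$ from being an exact product measure in the $x$-coordinates, and reconciling this with the mass-distribution estimates would be the main technical point.
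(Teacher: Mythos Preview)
Your proposal is correct and follows essentially the same approach as the paper: the key step is the product formula $Z_N(\lambda)=\prod_{n}\cosh(\lambda w_n)$, which the paper proves (Lemma~\ref{lem1}) via conditional expectation---equivalent to your successive-integration argument---and then feeds into the same pressure computation (Lemma~\ref{lem:pressure}) and application of Theorem~\ref{thm:main1}. Your explicit verification of (H2) and your separate treatment of the case $\alpha=0$ (i.e.\ $\lambda_\alpha=0$, not covered by Theorem~\ref{thm:main1}) are small improvements over the paper, which glosses over both points; the paper additionally offers a matrix-product derivation of the pressure and a direct Markov-measure argument bypassing Theorem~\ref{thm:main1}, the latter being the route you allude to in your final paragraph.
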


Notice that the result doesn't depend on the form of $g_n$, but only on the weights $(w_n)$. The key point for this independence is that $g_n$ only takes $-1,1$
as its values.

As a corollary of Theorem \ref{thm1}, we have the following result for
$$
  F(\alpha) =\left\{x \in \{-1,1\}^\mathbb{N}: \lim_{N\to\infty} \frac{1}{N}\sum_{n=1}^N \mu(n) x_nx_{n+1} =\alpha \right\}
$$
where $\mu$ is the M\"{o}bius function.

\begin{theorem}\label{thm2} For any $\alpha \in (-6/\pi^2, 6/\pi^2)$, we have
	$$
	\dim  F(\alpha) = 1- \frac{6}{\pi^2} + 
	\frac{6}{\pi^2 \log 2} H\left( \frac{1}{2} + \frac{ \pi^2}{12} \alpha \right)
	$$
	where $H(x) = -x \log x - (1-x)\log (1-x)$.

\end{theorem}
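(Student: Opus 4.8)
The plan is to obtain Theorem~\ref{thm2} as the special case of Theorem~\ref{thm1} in which the weight is the M\"{o}bius function and $g_n$ is a single coordinate. Concretely I would take the weight $w_n = \mu(n)$, whose range is the finite set $\{v_0, v_1, v_2\} = \{-1, 0, 1\}$, together with $g_n(x_{n+1}, x_{n+2}, \dots) = x_{n+1}$, so that $f_n(x) = \mu(n)\, x_n x_{n+1}$ and $S_N f(x) = \sum_{n=1}^{N} \mu(n)\, x_n x_{n+1}$, whose level sets are exactly the sets $F(\alpha)$. Hypothesis (C1) is then immediate: $g_n \equiv x_{n+1}$ takes values in $\{-1,1\}$ and depends only on the single coordinate $x_{n+1}$, so one may take $L = 1$.

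The only substantial input is verifying (C2), i.e.\ that the three frequencies $p_j$ of the values $v_j$ exist, and computing them. I would argue as follows. The set $\{\mu = 0\}$ is precisely the set of non-square-free integers, whose density is $1 - 1/\zeta(2) = 1 - 6/\pi^2$ by the classical Euler-product computation; hence $p_1 = 1 - 6/\pi^2$. For the remaining two values I would use that $p_2 + p_0$ equals the density $6/\pi^2$ of square-free integers, while $p_2 - p_0 = \lim_{N} \tfrac1N \sum_{n\le N}\mu(n)$. This last limit is $0$ by the Prime Number Theorem (equivalently $M(N) = o(N)$), so both $p_0$ and $p_2$ exist separately as genuine limits and equal $3/\pi^2$. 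With these values $\sum_j p_j|v_j| = p_0 + p_2 = 6/\pi^2$, which matches the interval $(-6/\pi^2, 6/\pi^2)$ asserted in Theorem~\ref{thm2}, so Theorem~\ref{thm1} applies on exactly this range.

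It then remains to simplify the formula of Theorem~\ref{thm1} for these data. The term $v_1 = 0$ contributes nothing to the defining equation for $\lambda_\alpha$, which collapses to $(p_0+p_2)\tanh\lambda_\alpha = \alpha$, i.e.\ $\tanh\lambda_\alpha = \tfrac{\pi^2}{6}\alpha$; moreover $v_1=0$ contributes $p_1\log 2$ to the dimension sum, while the values $v_0=-1$ and $v_2=1$ each contribute $\log(2\cosh\lambda_\alpha) - \lambda_\alpha \tanh\lambda_\alpha$. Dividing by $\log 2$ gives $\dim F(\alpha) = (1 - 6/\pi^2) + \tfrac{6}{\pi^2\log 2}\big(\log(2\cosh\lambda_\alpha) - \lambda_\alpha\tanh\lambda_\alpha\big)$. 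The final step is the elementary Legendre-type identity $\log(2\cosh\lambda) - \lambda\tanh\lambda = H\big(\tfrac12(1+\tanh\lambda)\big)$, which I would verify by writing $\tfrac12(1+\tanh\lambda) = e^\lambda/(2\cosh\lambda)$ and $\tfrac12(1-\tanh\lambda) = e^{-\lambda}/(2\cosh\lambda)$ and substituting into $H(x) = -x\log x - (1-x)\log(1-x)$. Since $\tfrac12(1+\tanh\lambda_\alpha) = \tfrac12 + \tfrac{\pi^2}{12}\alpha$, this yields exactly the claimed expression.

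I expect the only real obstacle to be the equality $p_0 = p_2$: it rests on $\sum_{n\le N}\mu(n) = o(N)$, which is equivalent to the Prime Number Theorem and is the one genuinely nonelementary ingredient. Everything else — the square-free density, the collapse of the defining equation once $v_1=0$ is discarded, and the $\log\cosh$--entropy identity — is routine algebra.
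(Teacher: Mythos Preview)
Your proposal is correct and follows essentially the same route as the paper: both apply Theorem~\ref{thm1} with $w_n=\mu(n)$ and $g_n(x_{n+1},\dots)=x_{n+1}$, invoke the two classical facts $\tfrac1N\sum_{n\le N}\mu(n)\to 0$ and $\tfrac1N\sum_{n\le N}|\mu(n)|\to 6/\pi^2$ to obtain the frequencies, and then reduce the resulting expression to the entropy form. Your use of the identity $\log(2\cosh\lambda)-\lambda\tanh\lambda = H\big(\tfrac12(1+\tanh\lambda)\big)$ packages the final simplification a bit more cleanly than the paper's step-by-step substitution, but the content is identical.
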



{\em Acknowledgement.}  The author would like to thank D. Ruelle for ensuring  the correct use of  his result on the analyticity of Liapounov exponent, and Lingmin Liao for his careful reading the first version of the paper.  Thanks go to Meng Wu for his idea used in the proof of Theorem \ref{thm:main2}. 
The main results in the paper were presented in a mini-course  during the 2019 Fall Program of Low-Dimensional Dynamics held in Shanghai Center for Mathematical Sciences and the author is grateful to Weixiao Shen for providing this opportunity. The main results as well as unsolved problems were also informally presented in the conference
Fifty Years of Thermodynamic Formalism  in Physics, Dynamics and  Probability
(Leiden, August 2018) and the author is grateful to Evgeny Verbitskiy, Frank Redig and Antony Quas for stimulating discussions.


\section{Thermodynamical formalism: Proof of Theorem \ref{thm:main1}}
\label{sect:2}
We present here a thermodynamic formalism  proposed in 
\cite{Fan1997},
adapted to our setting in the present paper. 
This formalism would work in other cases. As we will point out in the last section, there will be works to do with the  limit defining the pressure  and with the differentiability of the pressure.

\subsection{Fundamental inequalities}

For $m\le n$, denote
   $$
      P_{m, n} (x) := \frac{P_n(x)}{P_m(x)}; \quad Z_{m, n}(\lambda)
      := \mathbb{E} P_{m, n}^\lambda(x).
   $$
   
The following inequalities are fundamental. In \cite{Fan1997} (p. 1318),
the inequalities are stated in a more general case and proved in a different way.

\begin{lemma}[Fundamental inequalities]  \label{lem:ZZZ}
For any real number $\lambda \in \mathbb{R}$, there exists a positive constant $C(\lambda)>0$
such that for all integers   $0\le l\le m\le n$ we have 
	\begin{equation}\label{ieq:fund}
	    \frac{1}{C(\lambda)} \le \frac{Z_{l, n}(\lambda)}{Z_{l, m}(\lambda) Z_{m, n}(\lambda)} \le C(\lambda).
	\end{equation}
	The constant $C(\lambda)$ grows at most exponentially as function of $\lambda$,
	i.e. $C(\lambda) = e^{O(|\lambda|)}$. 
	\end{lemma}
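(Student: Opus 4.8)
The plan is to exploit the multiplicativity $P_{l,n}(x) = P_{l,m}(x)\, P_{m,n}(x)$, which immediately gives $Z_{l,n}(\lambda) = \mathbb{E}\!\left[P_{l,m}^\lambda P_{m,n}^\lambda\right]$, and to compare this joint expectation against the product $Z_{l,m}(\lambda) Z_{m,n}(\lambda) = \mathbb{E}[P_{l,m}^\lambda]\cdot\mathbb{E}[P_{m,n}^\lambda]$. Under the Bernoulli measure $\sigma$ the coordinates are i.i.d., so these two factors would be genuinely independent were it not that $P_{l,m}(x) = \exp\!\left(\sum_{k=l}^{m-1} f_k(T^k x)\right)$ depends not only on the block $(x_l, \ldots, x_{m-1})$ but also, through the tails of the functions $f_k$, on the very coordinates $x_m, x_{m+1}, \ldots$ that $P_{m,n}$ lives on. The whole point is to show that this residual coupling is bounded, with a bound that is controlled exponentially in $\lambda$.

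First I would record the \emph{distortion estimate} furnished by (H2). Since $f_k(T^k x)$ depends only on $x_k, x_{k+1}, \ldots$, the value of $P_{l,m}$ is unaffected by $x_0, \ldots, x_{l-1}$; hence for any two points $x, y$ agreeing on the block positions $l \le j \le m-1$ we may freely arrange that they also agree on $0, \ldots, l-1$, so that (H2) applied with $N = m$ yields
$$
\left|\log P_{l,m}(x) - \log P_{l,m}(y)\right| \le \sum_{k=l}^{m-1}\left|f_k(T^k x) - f_k(T^k y)\right| \le M,
$$
where $M$ denotes the finite supremum in (H2). Fixing a reference tail and letting $\bar P_{l,m}$ be the value of $P_{l,m}$ computed with the coordinates $\ge m$ replaced by that reference, $\bar P_{l,m}$ becomes a function of the block $(x_l, \ldots, x_{m-1})$ alone, and the estimate gives $e^{-|\lambda| M} \le P_{l,m}^\lambda(x)/\bar P_{l,m}^\lambda(x) \le e^{|\lambda| M}$ for every $x$.

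Then I would close the argument using independence. Because $P_{m,n}$ is a function of the tail $(x_m, x_{m+1}, \ldots)$ only, while $\bar P_{l,m}$ is a function of the block $(x_l, \ldots, x_{m-1})$ only, the two are independent under $\sigma$, so $\mathbb{E}[\bar P_{l,m}^\lambda P_{m,n}^\lambda] = \mathbb{E}[\bar P_{l,m}^\lambda]\, Z_{m,n}(\lambda)$. Inserting the two-sided comparison $P_{l,m}^\lambda = e^{O(|\lambda|M)}\,\bar P_{l,m}^\lambda$ both into $Z_{l,n}(\lambda) = \mathbb{E}[P_{l,m}^\lambda P_{m,n}^\lambda]$ and into $Z_{l,m}(\lambda) = \mathbb{E}[P_{l,m}^\lambda]$ sandwiches $Z_{l,n}(\lambda)$ between $e^{-2|\lambda|M}$ and $e^{2|\lambda|M}$ times $Z_{l,m}(\lambda)Z_{m,n}(\lambda)$, giving the fundamental inequalities with $C(\lambda) = e^{2M|\lambda|} = e^{O(|\lambda|)}$.

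The genuinely substantive step is the distortion estimate; everything else is the product structure of $\sigma$. The subtlety I would watch is the bookkeeping of which coordinates each factor depends on — in particular verifying that (H2), stated with agreement from coordinate $0$, actually controls the variation over an \emph{arbitrary interior block} $[l, m-1]$, which works precisely because $f_k$ ignores the coordinates before $x_k$. A secondary point to handle carefully is that the surrogate $\bar P_{l,m}$ must be inserted consistently in both $Z_{l,n}$ and $Z_{l,m}$, so that the factors $e^{\pm|\lambda|M}$ accumulate exactly to the claimed $e^{2M|\lambda|}$.
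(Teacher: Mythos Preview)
Your proof is correct and follows essentially the same approach as the paper: both freeze the coordinates $\ge m$ in $P_{l,m}$ via the distortion bound (H2) and then exploit the product structure of the Bernoulli measure $\sigma$ to factor the expectation. The only cosmetic difference is that the paper records the factorization step as a change-of-variables identity, $Z_{m,n}(\lambda) = q^m \int_{[a_0,\ldots,a_{m-1}]} e^{\lambda S_{m,n}f}\,d\sigma$ (an equality it reuses later when building the Gibbs measure), whereas you phrase the same fact directly as independence of $\bar P_{l,m}$ and $P_{m,n}$.
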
 

\begin{proof} For all integers $0\le m\le n$, let 
	$$S_{m,n}f(x)=\sum_{k=m}^{n-1} f_k(T^k x) = \sum_{k=m}^{n-1} f_k( x_k,x_{k+1},\cdots).
	$$ 
	Then $P_{m, n} = e^{S_{m,n}f}$ (by convention $S_{n,n}f=0$ so that
	$P_{n,n}=1$). First we notice that for any $(a_0, a_1,\cdots, a_{m-1})\in S^m$ we have
	\begin{equation}\label{eq:local-int}
	       Z_{m, n}(\lambda) = q^m \int_{[a_0, a_1, \cdots, a_{m-1}]}
	        e^{\lambda S_{m,n} f(x)} d\sigma(x).
	\end{equation}
	Indeed, the map $T^m: [a_0, a_1,\cdots, a_{m-1}] \to X$
	is bijective and it maps the probability measure 
	$q^m \sigma|_{[a_0, a_1, \cdots, a_{m-1}]}$ to the probability measure $\sigma$. So, by a change of variables,  the member at the right hand side of 
	(\ref{eq:local-int}) is equal to
	$$
	q^m \int_{[a_0, a_1, \cdots, a_{m-1}]}
	e^{\lambda \sum_{k=m}^{n-1} f_k(T^k x)} d\sigma(x)
	= \int_X
	e^{\lambda \sum_{k=m}^{n-1} f_k(T^{k-m} x)} d\sigma(x).
	$$
	Then, by the $T$-invariance of $\sigma$, we get
	$$
	   \int_X
	   e^{\lambda \sum_{k=m}^{n-1} f_k(T^{k-m} x)} d\sigma(x)
	   =\int_X
	   e^{\lambda \sum_{k=m}^{n-1} f_k(T^{k} x)} d\sigma(x)=Z_{m,n}(\lambda).
	   $$ 
	   Thus, (\ref{eq:local-int}) is proved. Now write 
     $$
         Z_{l,n}(\lambda)=\sum_{a_0, a_1, \cdots, a_{m-1}}
                \int_{[a_0, a_1, \cdots, a_{m-1}]}
                    e^{\lambda S_{l,m} f(x) + \lambda S_{m,n}f(x)} d\sigma(x).
     $$
     By the distortion hypothesis (H2), we have  
     $$
     Z_{l,n}(\lambda) \approx \sum_{a_0, a_1, \cdots, a_{m-1}}
         q^{-m} e^{\lambda S_{l,m} f(a_0, a_1, \cdots, a_{m-1}, *)}
         	\cdot q^m
     \int_{[a_0, a_1, \cdots, a_{m-1}]}
     e^{\lambda S_{m,n}f(x)} d\sigma(x).
     $$
     where $*$ represents any fixed sequence, and the constant involved
     in "$\approx$" is $e^{O(|\lambda|)}$. By (\ref{eq:local-int}),
     the above expression reads as
     $$
          Z_{l,n}(\lambda) \approx \sum_{a_0, a_1, \cdots, a_{m-1}}
          q^{-m} e^{\lambda S_{l,m} f(a_0, a_1, \cdots, a_{m-1}, *)}
          \cdot Z_{m, n}(\lambda).
     $$
     Using 
      once more the hypothesis (H2), we get that the last sum is equal to
     $Z_{l, m}(\lambda)$ up to a multiplicative constant 
     $e^{O(|\lambda|)}$. 
	
	\end{proof}	

We emphasize that the equality (\ref{eq:local-int}) is a key point.

\subsection{Construction of Gibbs measure}

Let
$$
     d \mu_{n, \lambda} : = \frac{P_n^\lambda(x)}{Z_n(\lambda)} dx. 
$$
It is a probability measure on $X$.

\begin{lemma}[Gibbs property]  
	 \label{lem:Gibbs} All weak limits of  
	the sequence of probability measures $(\mu_{n, \lambda})$  are equivalent.   
	For any such a limit, denoted by $\mu_{\lambda}$,  we have
	$$
	  \mu_{\lambda} ([x_0, x_1, \cdots, x_{n-1}]) \approx \frac{P_n^\lambda(x)}{q^n Z_n(\lambda)}.
	$$ 
	The constant involved in "$\approx$" depends on $\lambda$ but is independent of $n$ and $x$, and is of the size $e^{O(|\lambda|)}$.
\end{lemma}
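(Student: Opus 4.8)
The plan is to first prove the Gibbs estimate at the level of the approximating measures $\mu_{N,\lambda}$ for \emph{every} $N\ge n$, with a multiplicative constant of size $e^{O(|\lambda|)}$ that does not depend on $N$, $n$ or $x$, and then to pass to the weak limit. Fix a cylinder $[x_0,\dots,x_{n-1}]$ and take $N\ge n$. Since $S_N f = S_n f + S_{n,N}f$, we have $P_N = P_n\cdot P_{n,N}$, hence
$$
\mu_{N,\lambda}([x_0,\dots,x_{n-1}]) = \frac{1}{Z_N(\lambda)} \int_{[x_0,\dots,x_{n-1}]} P_n^\lambda(x)\, P_{n,N}^\lambda(x)\, dx.
$$

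First I would observe that $P_n^\lambda$ is essentially constant on the cylinder: any two points agreeing on the first $n$ coordinates produce values of $S_n f$ that differ by at most the quantity bounded in (H2), so $P_n^\lambda(x) = e^{O(|\lambda|)}P_n^\lambda(x')$ for $x,x'$ in $[x_0,\dots,x_{n-1}]$. Pulling this factor out of the integral, what remains is $\int_{[x_0,\dots,x_{n-1}]} P_{n,N}^\lambda\,dx = \int_{[x_0,\dots,x_{n-1}]} e^{\lambda S_{n,N}f}\,dx$, which by the key identity (\ref{eq:local-int}), applied with $m=n$ and upper index $N$, is \emph{exactly} $q^{-n}Z_{n,N}(\lambda)$. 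This yields
$$
\mu_{N,\lambda}([x_0,\dots,x_{n-1}]) \approx \frac{P_n^\lambda(x)\, Z_{n,N}(\lambda)}{q^n\, Z_N(\lambda)}.
$$

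Next I would invoke the Fundamental Inequalities of Lemma \ref{lem:ZZZ} with $l=0$, $m=n$ and upper index $N$, which give $Z_N(\lambda)=Z_{0,N}(\lambda)\approx Z_{0,n}(\lambda)Z_{n,N}(\lambda)=Z_n(\lambda)Z_{n,N}(\lambda)$, so that $Z_{n,N}(\lambda)/Z_N(\lambda)\approx 1/Z_n(\lambda)$. Substituting gives
$$
\mu_{N,\lambda}([x_0,\dots,x_{n-1}]) \approx \frac{P_n^\lambda(x)}{q^n\, Z_n(\lambda)},
$$
with an $\approx$ constant of the form $e^{O(|\lambda|)}$ that is crucially independent of $N$. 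Because a cylinder is clopen it is a continuity set for every Borel probability measure, so whenever $\mu_{N_j,\lambda}\to\mu_\lambda$ weakly along a subsequence one has $\mu_{N_j,\lambda}([x_0,\dots,x_{n-1}])\to\mu_\lambda([x_0,\dots,x_{n-1}])$; the $N$-uniform two-sided bound therefore passes to the limit and establishes the stated Gibbs estimate for $\mu_\lambda$ (weak limits exist since $X$ is compact).

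Finally, equivalence of any two weak limits $\mu_\lambda,\mu_\lambda'$ follows formally: the Gibbs estimate compares each of them to the \emph{same} quantity $P_n^\lambda(x)/(q^n Z_n(\lambda))$, so $\mu_\lambda(C)\approx\mu_\lambda'(C)$ on every cylinder $C$ with one uniform constant; since each open set is a countable disjoint union of cylinders and both measures are outer regular, the comparison extends to all Borel sets, making $\mu_\lambda$ and $\mu_\lambda'$ mutually absolutely continuous with densities bounded above and below. The step I expect to demand the most care is keeping every $e^{O(|\lambda|)}$ constant independent of $N$ through the chain of approximations, since only that uniformity legitimizes the passage to the weak limit; this $N$-independence is exactly what the Fundamental Inequalities and hypothesis (H2) are designed to deliver.
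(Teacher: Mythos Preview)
Your proof is correct and follows essentially the same route as the paper: split $P_N=P_n\cdot P_{n,N}$, use (H2) to freeze $P_n^\lambda$ on the cylinder, use the identity (\ref{eq:local-int}) to evaluate $\int_{C_n(x)}P_{n,N}^\lambda$, and use the fundamental inequalities to replace $Z_{n,N}/Z_N$ by $1/Z_n$. The only cosmetic difference is that you first derive an $N$-uniform estimate for $\mu_{N,\lambda}(C_n(x))$ and then pass to the weak limit, whereas the paper writes the limit first and bounds each step by a $\limsup$; these are the same argument in different order, and your presentation makes the $N$-independence of the constants more explicit.
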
 

\begin{proof} The proof is already in (\cite{Fan1997}, p.1319). It is simpler in the present case. For completeness, we include it here. 
	Let $C_n(x)$ be the cylinder $[x_0,x_1, \cdots, x_{n-1}]$. Assume that $\mu_{\lambda}$ is the weak limit of 
	$(\mu_{N_j, \lambda})$ for some sequence of integers  $(N_j)$. We have 
	\begin{eqnarray*}
		\mu_\lambda(C_n(x)) 
		& = &   \lim_{j\to\infty}\frac{1}{Z_{N_j}(\lambda)}\int_{C_n(x)}     
		P_{N_j}^\lambda(y)dy\\
		& \le& C'    \limsup_{j\to\infty}\frac{1}{Z_n(\lambda)Z_{n, N_j}(\lambda)}\int_{C_n(x)}     
		P_n^\lambda (y) P_{n, N_j}^\lambda(y)dy\\
			& \le & C''    
		 \limsup_{j\to\infty}\frac{P_n^\lambda(x)}{q^n Z_n(\lambda)}\cdot \frac{q^n}{Z_{n, N_j}(\lambda)}\int_{C_n(x)}     
		P_{n, N_j}^\lambda(y)dy\\
		& = &   C''
	\frac{P_n^\lambda(x)}{q^n Z_n(\lambda)}.	
		\end{eqnarray*}  
	The first inequality above is a consequence of the fundamental
	inequalities (\ref{ieq:fund}); the second inequality is a consequence of the distortion hypothesis (H2) and the last equality is because of (\ref{eq:local-int}).  
	
	The inverse inequality can be proved in the same way, because we have both side estimates in our fundamental inequalities.  
	\end{proof}

The measure $\mu_{\lambda}$ will be called {\bf Gibbs measure} associated to $(f_k)$ and
$\lambda$.  Fix $n\ge 1$. Define 
$$
    \forall k\ge 0, \ \ g_k(x) = f_{n+k}(x)
$$
The Gibbs measure associated to $(g_k)$ and $\lambda$ will be denoted by $\mu_\lambda^{(n)}$. This measure depends on the tail from $n$ on of $(f_k)$.

 Notice that $\mu_\lambda=\mu_\lambda^{(0)}$.
The family $(\mu_\lambda^{(n)})$ has the following quasi-Bernoulli
property, which is a direct consequence of the above Lemma \ref{lem:ZZZ} and Lemma \ref{lem:Gibbs}.

\begin{lemma}[Quasi-Bernoulli property] \label{lem:quasi-Bernoulli}
	 For all integers $n$ and $m$ and for all sequences $I\in S^n$ and $J\in S^m$, we have 
	 $$
	      \mu_{\lambda}([IJ]) \approx  \mu_{\lambda}([I])  \mu_{\lambda}^{(n)}([J])
	 $$
	 where the  constants involved in "$\approx$" are independent of $n, m$ and $I,J$.
	\end{lemma}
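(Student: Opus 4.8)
The plan is to derive the quasi-Bernoulli property directly from the Gibbs estimate of Lemma \ref{lem:Gibbs} combined with the fundamental inequalities of Lemma \ref{lem:ZZZ}, treating the two factors $\mu_\lambda([I])$ and $\mu_\lambda^{(n)}([J])$ separately. First I would fix $I=(x_0,\dots,x_{n-1})\in S^n$ and $J=(x_n,\dots,x_{n+m-1})\in S^m$, so that $[IJ]=[x_0,\dots,x_{n+m-1}]$ is a cylinder of length $n+m$. Applying the Gibbs property to $\mu_\lambda$ at level $n+m$ gives
$$
\mu_\lambda([IJ]) \approx \frac{P_{n+m}^\lambda(x)}{q^{n+m}\,Z_{n+m}(\lambda)},
$$
and applying it at level $n$ gives $\mu_\lambda([I]) \approx P_n^\lambda(x)/(q^n Z_n(\lambda))$. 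The natural move is then to factor $P_{n+m}^\lambda = P_n^\lambda\,P_{n,n+m}^\lambda$ and $q^{n+m}=q^n q^m$, reducing the claim to showing that the residual factor
$$
\frac{P_{n,n+m}^\lambda(x)}{q^m\,Z_{n+m}(\lambda)/Z_n(\lambda)}
$$
is comparable, up to a $\lambda$-dependent constant, to $\mu_\lambda^{(n)}([J])$.

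Next I would identify this residual factor with the Gibbs measure $\mu_\lambda^{(n)}$. By definition $g_k(x)=f_{n+k}(x)$, so the partition function built from $(g_k)$ at level $m$ is exactly $\mathbb{E}\,e^{\lambda\sum_{k=0}^{m-1} g_k} = \mathbb{E}\,e^{\lambda S_{n,n+m}f\circ T^{n}}$, which by the $T$-invariance of $\sigma$ and the key identity (\ref{eq:local-int}) coincides with $Z_{n,n+m}(\lambda)$. The Gibbs property applied to $(g_k)$ then yields
$$
\mu_\lambda^{(n)}([J]) \approx \frac{e^{\lambda S_{n,n+m}f(x)}}{q^m\,Z_{n,n+m}(\lambda)} = \frac{P_{n,n+m}^\lambda(x)}{q^m\,Z_{n,n+m}(\lambda)}.
$$
Here I use that $J$ sits in positions $n,\dots,n+m-1$ of $x$, which are exactly positions $0,\dots,m-1$ for the shifted data defining $(g_k)$, so the exponent is genuinely $S_{n,n+m}f(x)$ and not something requiring a re-shift; the distortion hypothesis (H2) guarantees that the value of this sum is insensitive to the tail beyond position $n+m$ up to a factor $e^{O(|\lambda|)}$.

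Combining the three displays, the product $\mu_\lambda([I])\,\mu_\lambda^{(n)}([J])$ is comparable to
$$
\frac{P_n^\lambda(x)}{q^n Z_n(\lambda)}\cdot\frac{P_{n,n+m}^\lambda(x)}{q^m Z_{n,n+m}(\lambda)} = \frac{P_{n+m}^\lambda(x)}{q^{n+m}\,Z_n(\lambda)\,Z_{n,n+m}(\lambda)},
$$
while $\mu_\lambda([IJ])\approx P_{n+m}^\lambda(x)/(q^{n+m}Z_{n+m}(\lambda))$. The two expressions agree up to the ratio $Z_{n+m}(\lambda)/(Z_n(\lambda)Z_{n,n+m}(\lambda))$, which is precisely the quantity controlled two-sidedly by the fundamental inequalities (\ref{ieq:fund}) (with $l=0$, $m=n$, $n\mapsto n+m$), hence lies between $1/C(\lambda)$ and $C(\lambda)$. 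This closes the estimate with a constant of size $e^{O(|\lambda|)}$ independent of $n,m,I,J$.

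The main obstacle I anticipate is purely bookkeeping rather than conceptual: one must be careful that the comparison constants in Lemma \ref{lem:Gibbs} are genuinely uniform in $n$ and $x$ and, crucially, that the same uniformity survives when the lemma is applied to the shifted sequence $(g_k)$ defining $\mu_\lambda^{(n)}$ for every $n$ simultaneously. Both the fundamental inequalities and the Gibbs estimate trace their uniform constants back to the single distortion bound (H2), whose supremum over $N$ is finite, so the constants do not deteriorate as $n$ grows; verifying this shared-constant claim carefully is the one place where I would slow down. Everything else is multiplying and dividing the three comparabilities above, so the proof is short once the identification of $\mu_\lambda^{(n)}([J])$ with the residual $P$-factor is made explicit.
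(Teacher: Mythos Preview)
Your proposal is correct and is exactly what the paper has in mind: the paper simply states that the lemma ``is a direct consequence of the above Lemma \ref{lem:ZZZ} and Lemma \ref{lem:Gibbs}'', and your argument unpacks this by applying the Gibbs estimate to $\mu_\lambda$ at levels $n$ and $n+m$, to $\mu_\lambda^{(n)}$ at level $m$, and then matching the partition-function ratio with the fundamental inequalities. Your one flagged worry is legitimate but easily resolved: the (H2) bound for the shifted family $(g_k)=(f_{n+k})$ is dominated by the original (H2) bound (extend any pair $y,z$ agreeing on $N$ coordinates to a pair $x,x'$ agreeing on $n+N$ coordinates by prepending an identical block), so the constants in Lemmas \ref{lem:ZZZ} and \ref{lem:Gibbs} for $\mu_\lambda^{(n)}$ are uniform in $n$.
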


\subsection{Large deviation}

We are going to present a law of large numbers with respect to our Gibbs measures. It is a consequence of a well known result on large deviation.
The large deviation was used in multifractal analysis in early works (see \cite{BMP1992}, for example).
The following result on convex functions and their conjugates will be useful.

\begin{proposition}[\cite{Ellis}, p.221]\label{prop:phi}
	For any convex function defined on $\mathbb{R}$, we have
	\begin{itemize}
		\item[(i)]   $\alpha \beta \le \phi(\beta) + \phi^*(\alpha)$,  \ ($\forall \alpha, \beta \in \mathbb{R}$).\\
		\item[(ii)]   $\alpha \beta = \phi(\beta) + \phi^*(\alpha) \Longleftrightarrow \alpha \in \partial \phi(\beta)$.\\
		\item[(iii)]   $ \alpha \in \partial \phi(\beta) \Longleftrightarrow \beta \in \partial \phi^*(\alpha)$.\\
		\item[(iv)] $\phi^{**}(\beta) =\phi(\beta)$, ($\forall \beta \in \mathbb{R}$).
	\end{itemize}
\end{proposition}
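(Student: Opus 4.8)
The plan is to establish the four items in a logical order so that later ones build on earlier ones rather than being proved from scratch, since the only genuine input from convexity is the nonemptiness of the subderivative, which on the real line is elementary. First I would prove (i), the Young--Fenchel inequality, directly from the definition of the conjugate. Because $\phi^*(\alpha) = \sup_{\gamma}(\alpha\gamma - \phi(\gamma))$, taking $\gamma = \beta$ in the supremum already gives $\phi^*(\alpha) \ge \alpha\beta - \phi(\beta)$, and rearranging yields $\alpha\beta \le \phi(\beta) + \phi^*(\alpha)$. No convexity is needed here; this holds for an arbitrary function $\phi$.

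Next I would prove (ii). The equality $\alpha\beta = \phi(\beta) + \phi^*(\alpha)$ holds precisely when $\beta$ attains the supremum defining $\phi^*(\alpha)$, that is, when $\alpha\gamma - \phi(\gamma) \le \alpha\beta - \phi(\beta)$ for every $\gamma \in \mathbb{R}$. Writing $\gamma = \beta + \eta$ and rearranging converts this into $\phi(\beta + \eta) - \phi(\beta) \ge \alpha\eta$ for all $\eta$, which is exactly the defining condition $\alpha \in \partial\phi(\beta)$. The two statements are literally the same family of inequalities, so the equivalence is immediate in both directions.

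Then I would prove (iv), the biconjugate identity $\phi^{**} = \phi$, which I expect to be the main obstacle, as it is the only place where convexity is truly used. One inequality is free from (i): for every $\alpha$ we have $\alpha\beta - \phi^*(\alpha) \le \phi(\beta)$, so taking the supremum over $\alpha$ gives $\phi^{**}(\beta) \le \phi(\beta)$. For the reverse inequality I would invoke the fact that a finite convex function on $\mathbb{R}$ has a nonempty subderivative at every point (its one-sided derivatives exist, and any value between them lies in $\partial\phi(\beta)$; the excerpt has already noted that $\partial\phi(\beta)$ is a closed interval). Choosing any $\alpha_0 \in \partial\phi(\beta)$ and applying (ii) gives $\alpha_0\beta - \phi^*(\alpha_0) = \phi(\beta)$, whence $\phi^{**}(\beta) \ge \alpha_0\beta - \phi^*(\alpha_0) = \phi(\beta)$. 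Combining the two inequalities yields (iv).

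Finally I would deduce (iii) from (ii) and (iv) by a symmetry argument. By (ii), the condition $\alpha \in \partial\phi(\beta)$ is equivalent to $\alpha\beta = \phi(\beta) + \phi^*(\alpha)$. Applying (ii) with the convex function $\phi^*$ in place of $\phi$, and using (iv) to rewrite $\phi^{**} = \phi$, shows that $\beta \in \partial\phi^*(\alpha)$ is equivalent to $\alpha\beta = \phi^*(\alpha) + \phi^{**}(\beta) = \phi^*(\alpha) + \phi(\beta)$. These are the same relation, so the two subderivative memberships are equivalent. The single delicate point in the whole argument is the nonemptiness of $\partial\phi(\beta)$ used in (iv); everything else is bookkeeping with the defining inequalities.
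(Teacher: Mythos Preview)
Your argument is correct and is the standard route to these facts. Note, however, that the paper does not give its own proof of this proposition: it is simply quoted from Ellis's book \cite{Ellis}, p.~221, and used as a black box. So there is nothing to compare against; your write-up supplies the details the paper chose to omit.
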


Let $(W_n)$ be a sequence of random variables on a probability space $(\Omega, \mathcal{A}, \nu)$ and $(a_n)$ be a sequence of positive real numbers tending to the infinity. Suppose that the following limit exits
$$
   c(\beta):=c_W(\beta):= \lim_{n\to \infty} \frac{1}{a_n} \log \mathbb{E} e^{\beta W_n}, \quad \forall \beta \in \mathbb{R}.
$$
We call $c(\beta)$ the {\bf free energy function} of $(W_n)$ with respect to $\nu$ and weighted by $(a_n)$. By the upper large deviation bound theorem (\cite{Ellis}, p. 230), for any non empty compact set $K\subset \mathbb{R}$ we have
$$
    \limsup_{n\to \infty} \frac{1}{a_n} \log \sigma\big\{ a_n^{-1} W_n \in K\big\}
    \le - \inf_{\alpha \in K} c^*(\alpha).
$$ 
Notice that $c(0)=0$. By Proposition \ref{prop:phi} (i), we have $c^*(\alpha)\ge 0$ for all $\alpha \in \mathbb{R}$. By Proposition \ref{prop:phi} (ii), we have $c^*(\alpha) =0$ iff $\alpha \in \partial c(0)$. 
Let 
$$\Delta^- := \min \partial c(0), \quad\Delta^+ := \max \partial c(0).
$$ 
For any   compact set $K$ disjoint  from $[\Delta^-, \Delta^+]$, 
we have $\eta:=\inf_K c^*(\alpha) >0$. 
By the upper large deviation bound theorem, for large $n$ we have
$$
    \nu\big\{  a_n^{-1} W_n \in K  \big\} \le e^{- \eta a_n/2}.
$$
Suppose that $\sum e^{-\epsilon a_n }<\infty$ for all $\epsilon >0$
(it is the case when $a_n=n$). By the Borel-Cantelli lemma, we get 
\begin{equation}\label{LLN1}
   \nu\!-\!a.e \quad    \min \partial c(0) \le \liminf_{n\to \infty} \frac{W_n}{a_n}
      \le \limsup_{n\to \infty} \frac{W_n}{a_n} \le \max \partial c(0).
\end{equation}

Now fix a Gibbs measure $\mu_{\lambda}$. We consider the free energy
of $S_nf(x)$
relative to $(\mu_{\lambda}, \{n\})$ defined as follows
$$
     c_\lambda (\beta) := \lim_{n\to\infty} \frac{1}{n} \log \mathbb{E}_{\mu_{\lambda}} P_n^\beta(x).
$$

\begin{lemma}[\cite{Fan1997}, p.1322] Suppose the limits defining $\phi(\lambda)$ exist. Then the limit defining $c_\lambda(\beta)$
	exists and we have  
	$$
	c_\lambda(\beta) = \phi(\lambda+\beta) - \phi(\lambda).
	$$ 
\end{lemma}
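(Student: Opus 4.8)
The plan is to prove the non-asymptotic comparison
$$
\mathbb{E}_{\mu_{\lambda}} P_n^\beta(x) \approx \frac{Z_n(\lambda+\beta)}{Z_n(\lambda)},
$$
where the multiplicative constant hidden in ``$\approx$'' is of size $e^{O(|\lambda|+|\beta|)}$ and, crucially, independent of $n$. The lemma then follows immediately: taking $\frac1n\log$ and letting $n\to\infty$, the $n$-independent constant contributes $O(1/n)\to 0$, while (H1) gives $\frac1n\log Z_n(\mu)\to\phi(\mu)$ for every $\mu\in\mathbb{R}$. In particular the existence of the limit defining $c_\lambda(\beta)$ comes for free, since the right-hand side is the difference $\phi(\lambda+\beta)-\phi(\lambda)$ of two convergent quantities.

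To establish the comparison I would decompose the integral over the $n$-cylinders $C_n=[a_0,\dots,a_{n-1}]$,
$$
\mathbb{E}_{\mu_{\lambda}} P_n^\beta = \sum_{C_n}\int_{C_n} P_n^\beta\,d\mu_{\lambda},
$$
and use two approximations on each cylinder. First, although $\log P_n(x)=\sum_{k=0}^{n-1}f_k(T^k x)$ depends on all coordinates, the distortion hypothesis (H2) guarantees that it oscillates by at most a fixed constant as $x$ ranges over $C_n$; hence $P_n^\beta(x)$ may be replaced by $P_n^\beta$ evaluated at a fixed representative point $a$ of $C_n$, at the cost of a factor $e^{O(|\beta|)}$ uniform in $n$ and in the cylinder. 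Second, the Gibbs property (Lemma~\ref{lem:Gibbs}) replaces $\mu_{\lambda}(C_n)$ by $P_n^\lambda(a)/(q^n Z_n(\lambda))$, again up to a factor $e^{O(|\lambda|)}$ uniform in $n$ and $a$.

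Combining the two, each cylinder contributes $\approx P_n^{\lambda+\beta}(a)/(q^n Z_n(\lambda))$, so that
$$
\mathbb{E}_{\mu_{\lambda}} P_n^\beta \approx \frac{1}{Z_n(\lambda)}\sum_{C_n} q^{-n}\,P_n^{\lambda+\beta}(a).
$$
It then remains to recognize the last sum as $Z_n(\lambda+\beta)$, up to the same kind of constant: by the definition $Z_n(\lambda+\beta)=\int_X P_n^{\lambda+\beta}\,dx=\sum_{C_n}\int_{C_n}P_n^{\lambda+\beta}\,dx$ together with one more application of (H2), the integrand is constant on each $C_n$ up to $e^{O(|\lambda|+|\beta|)}$, whence $\int_{C_n}P_n^{\lambda+\beta}\,dx\approx q^{-n}P_n^{\lambda+\beta}(a)$ (recall $\sigma(C_n)=q^{-n}$). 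This gives the claimed comparison.

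The one delicate point, which is the main obstacle, is the bookkeeping of these comparison constants: every invocation of (H2) and of Lemma~\ref{lem:Gibbs} must be checked to be uniform in $n$ (and, for (H2), uniform over the cylinders), so that the accumulated factor is genuinely $e^{O(|\lambda|+|\beta|)}$ with no hidden $n$-dependence. This uniformity is exactly what (H2) and the Gibbs property supply, and it is precisely what makes the constant vanish after dividing by $n$. Notably, no control on the \emph{speed} of convergence in (H1) is required, only the existence of the limits $\phi(\lambda)$ and $\phi(\lambda+\beta)$.
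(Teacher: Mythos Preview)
Your proposal is correct. The paper does not actually prove this lemma---it simply cites \cite{Fan1997}---but your argument is exactly the natural one in this framework, and it mirrors the computation the paper does carry out for the closely related Lemma~\ref{lem:tau_lambda} (where the Gibbs property is used in the same way to relate a $\mu_\lambda$-sum to a ratio of partition functions).
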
 

	It follows  that $c_\lambda(0) =0$ so that $c_\lambda^*(\alpha)\ge 0$ for all $\alpha$. Also
	$
	    c_\lambda^*(\alpha) =0 
	$ iff $\alpha \in \partial \phi(\lambda)$. 
	Then we can apply (\ref{LLN1}) to get the following law of large numbers.
	
	\begin{lemma}[Law of large numbers] \label{LLN2} For $\mu_\lambda$-almost all $x$,  we have
	$$
	          \min \partial \phi(\lambda) \le \underline{A}(x)
	          \le  \overline{A}(x)
	          \le \max \partial \phi(\lambda). 
	$$   \end{lemma}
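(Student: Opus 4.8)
The plan is to read this lemma off directly from the abstract large deviation statement (\ref{LLN1}), by making the correct identification of the random sequence and its free energy. Concretely, I would take $\nu = \mu_\lambda$, weights $a_n = n$, and the random variables $W_n := S_n f(x) = \log P_n(x)$, so that $a_n^{-1} W_n = S_n f(x)/n$ and hence $\liminf_n W_n/a_n = \underline{A}(x)$ and $\limsup_n W_n/a_n = \overline{A}(x)$. With these choices, the conclusion of (\ref{LLN1}) becomes precisely the two-sided bound claimed in the lemma, provided the endpoints $\min \partial c(0)$ and $\max \partial c(0)$ appearing there coincide with $\min \partial \phi(\lambda)$ and $\max \partial \phi(\lambda)$.

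The first step is to verify that the free energy function of $(W_n)$ relative to $(\mu_\lambda, \{n\})$ is exactly $c_\lambda$. This is immediate, since $\mathbb{E}_{\mu_\lambda} e^{\beta W_n} = \mathbb{E}_{\mu_\lambda} P_n^\beta(x)$, whose normalized logarithmic limit is $c_\lambda(\beta)$ by the preceding lemma; in particular $c_\lambda(\beta) = \phi(\lambda + \beta) - \phi(\lambda)$. The second step is to identify $\partial c_\lambda(0)$. Because $c_\lambda$ is obtained from $\phi$ by the horizontal translation $\beta \mapsto \lambda + \beta$ followed by subtraction of the constant $\phi(\lambda)$, one has $\partial c_\lambda(0) = \partial \phi(\lambda)$; alternatively this follows from Proposition \ref{prop:phi}(ii) together with the already noted equivalence $c_\lambda^*(\alpha) = 0 \Leftrightarrow \alpha \in \partial \phi(\lambda)$. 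Consequently $\min \partial c_\lambda(0) = \min \partial \phi(\lambda)$ and $\max \partial c_\lambda(0) = \max \partial \phi(\lambda)$, which is exactly the matching needed above.

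It then remains to check the two hypotheses behind (\ref{LLN1}). The summability $\sum_n e^{-\epsilon n} < \infty$ for every $\epsilon > 0$ holds trivially for $a_n = n$, so the Borel–Cantelli step goes through. The only point requiring a little care is that the upper large deviation bound is applied on a compact set: since $\|f_n\|_\infty = O(1)$, the ratios $S_n f(x)/n$ are uniformly bounded, say by $M$, so for each $\delta > 0$ the exceptional event $\{S_n f(x)/n \notin (\min \partial \phi(\lambda) - \delta, \max \partial \phi(\lambda) + \delta)\}$ is contained in a fixed compact set $K \subset [-M, M]$ disjoint from $[\min \partial \phi(\lambda), \max \partial \phi(\lambda)]$. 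On such a $K$ the lower semicontinuous function $c_\lambda^*$, which vanishes only on $\partial \phi(\lambda)$, attains a strictly positive minimum $\eta$, and (\ref{LLN1}) then yields the confinement of $\underline{A}(x)$ and $\overline{A}(x)$ for $\mu_\lambda$-almost every $x$, after letting $\delta \downarrow 0$ along a countable sequence. I expect no substantive obstacle here: all the analytic content — the existence of $c_\lambda$ and its explicit formula — is already supplied by the preceding lemma, so this proof is essentially a bookkeeping specialization of the abstract statement (\ref{LLN1}).
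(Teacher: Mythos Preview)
Your proposal is correct and follows exactly the paper's approach: the paper simply notes that $c_\lambda(\beta)=\phi(\lambda+\beta)-\phi(\lambda)$ (so $\partial c_\lambda(0)=\partial\phi(\lambda)$) and then invokes (\ref{LLN1}) directly. The additional care you take with compactness and summability is implicit in the paper's derivation of (\ref{LLN1}) itself.
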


It will be more practical to work with
$$
    \psi(\lambda):= \phi(\lambda) + \log q.
$$
The above inequalities in Lemma \ref{LLN2} also hold with $\psi$ replacing $\phi$.

\subsection{Dimensions of Gibbs measures}
The local lower and upper dimensions of a measure $\mu$ are respectively defined by
\begin{eqnarray*}
    \underline{D}(\mu,x) & = & \liminf_{n\to \infty}
    \frac{\log \mu([x_0, x_1, \cdots, x_{n-1}])}{\log q^{-n}}, \\
    \overline{D}(\mu,x) &=& \limsup_{n\to \infty}
    \frac{\log \mu([x_0, x_1, \cdots, x_{n-1}])}{\log q^{-n}}
\end{eqnarray*}
The lower and upper Hausdorff dimensions of $\mu$ are respectively defined by
\begin{eqnarray*}
     \dim_*\mu &=& \inf\{\dim E: \mu(E)>0\}, \\
     \dim^*\mu &=& \inf\{\dim E: \mu(E^c)=0\}.
\end{eqnarray*}
The lower packing dimension ${\rm Dim}_*\mu$ and the upper packing dimension ${\rm Dim}^* \mu$   are similarly defined by using 
the packing dimension ${\rm Dim} E$ instead of the Hausdorff dimension
$\dim E$. 

A systematic study of the Hausdorff dimensions $\dim_*\mu$ and $\dim^*\mu$ was carried out in \cite{Fan1989, Fan1994} when $X$ is a homogeneous space. Later, the packing dimensions 
${\rm Dim}_*\mu$ and ${\rm Dim}^*\mu$ were studied independently  by Tamashiro \cite{Tamashiro1995} and Heurteaux \cite{Heurteaux1998}. 
Let us just state the following result. 
\medskip

\begin{proposition}[\cite{Fan1989, Fan1994,Heurteaux1998,Tamashiro1995}]\label{prop:fan} For the Hausdorff dimensions we have
	$$
	\dim_*\mu = {\rm ess inf}_\mu \underline{D}(\mu, x),
	\quad 
	\dim^*\mu = {\rm ess  sup}_\mu \underline{D}(\mu, x).
	$$
Similar formulas hold for the packing dimensions ${\rm Dim}_* \mu$
and ${\rm Dim}^*\mu$ if we replace $\underline{D}(\mu, x)$
by $\overline{D}(\mu, x)$.
	\end{proposition}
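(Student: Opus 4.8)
The plan is to deduce all four identities from the Billingsley lemma, which compares the local dimension of $\mu$ with the Hausdorff (resp. packing) dimension of a Borel set. The symbolic setting makes this transparent: the ball $B(x,q^{-n})$ equals the cylinder $[x_0,\dots,x_{n-1}]$, so the local dimensions $\underline{D}(\mu,x)$ and $\overline{D}(\mu,x)$ computed through cylinders coincide with those computed through balls, and any set may be covered, or packed, by nested cylinders. The Hausdorff form of the lemma I will use reads: for a Borel set $E$, if $\underline{D}(\mu,x)\le s$ for every $x\in E$ then $\dim E\le s$; and if $\underline{D}(\mu,x)\ge s$ for every $x\in E$ and $\mu(E)>0$ then $\dim E\ge s$. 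The first statement comes from a covering argument --- for each $x$ one selects cylinders $C_{n_k}(x)$ on which $\mu$ is comparatively large, $\mu(C_{n_k}(x))\ge (q^{-n_k})^{s+\epsilon}$, so that $\sum(\mathrm{diam})^{s+\epsilon}\le\sum\mu(C_{n_k}(x))$ stays bounded --- while the second is the mass distribution principle, since $\mu(C_n(x))\le (q^{-n})^{s-\epsilon}$ on $E$ forces $\mathcal{H}^{s-\epsilon}(E)\ge\mu(E)>0$.

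Granting this lemma, I would obtain $\dim_*\mu={\rm ess\,inf}_\mu\,\underline{D}(\mu,x)=:d_*$ by a two-sided squeeze. For the lower bound, fix $t<d_*$; then $\{\underline{D}\ge t\}$ has full $\mu$-measure, so for any $E$ with $\mu(E)>0$ the set $E\cap\{\underline{D}\ge t\}$ still has positive measure and satisfies $\underline{D}\ge t$, whence $\dim E\ge t$; letting $t\uparrow d_*$ gives $\dim_*\mu\ge d_*$. For the upper bound, fix $t>d_*$; then $\{\underline{D}<t\}$ has positive measure and carries $\underline{D}\le t$, so $\dim_*\mu\le t$, and $t\downarrow d_*$ finishes it. The companion identity $\dim^*\mu={\rm ess\,sup}_\mu\,\underline{D}(\mu,x)=:d^*$ is proved identically, now testing against full-measure sets: for $t>d^*$ the full-measure set $\{\underline{D}\le t\}$ has dimension $\le t$, giving $\dim^*\mu\le d^*$; for $t<d^*$ the positive-measure set $\{\underline{D}>t\}$ meets every full-measure set in a positive-measure set of dimension $\ge t$, giving $\dim^*\mu\ge d^*$.

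The packing statements follow by the very same squeeze, with $\underline{D}$ replaced by $\overline{D}$ and the Hausdorff lemma replaced by its packing analogue: $\overline{D}(\mu,x)\le s$ on $E$ implies ${\rm Dim}\,E\le s$, and $\overline{D}(\mu,x)\ge s$ on $E$ with $\mu(E)>0$ implies ${\rm Dim}\,E\ge s$; one then repeats the four estimates verbatim. I expect the only genuine obstacle to be this packing comparison rather than the squeeze, which is purely formal. The delicate point is the upper bound ${\rm Dim}\,E\le s$ from $\overline{D}\le s$: the packing dimension is a modified, countably stable upper box dimension, so its control requires a Vitali-type packing argument together with the density properties of packing measures --- exactly what Tamashiro and Heurteaux establish in the cited works. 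Once that lemma is in hand the proof is complete, and I would invoke those references for the packing comparison and the references of Fan for the Hausdorff one.
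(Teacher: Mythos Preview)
The paper does not prove this proposition; it is stated with attribution to \cite{Fan1989, Fan1994, Heurteaux1998, Tamashiro1995} and used as a black box. Your proposal therefore cannot be compared with a proof in the paper, but it is the standard argument one finds in those references and it is correct as sketched: the four identities all reduce, via the two-sided squeeze you describe, to the Billingsley-type comparison lemmas, and the symbolic structure (cylinders are nested or disjoint) makes the Vitali extraction in the covering direction clean. Your identification of the packing upper bound (${\rm Dim}\,E\le s$ from $\overline{D}\le s$ on $E$) as the only substantive analytic input is exactly right, and citing \cite{Tamashiro1995, Heurteaux1998} for that step is appropriate.
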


From the Gibbs property (Lemma \ref{lem:Gibbs}), we get immediately
the following relation between the local dimensions of a Gibbs measure and the averages $\underline{A}(x)$ and $\overline{A}(x)$.

\begin{lemma}[Local dimensions of Gibbs measures]\label{lem:localdim}
	  For all $x\in X$ we have
	  $$
	       \underline{D}(\mu_{\lambda}, x) = \frac{\psi(\lambda ) - \lambda \overline{A}(x)}{\log q} 
	       \ {\rm if }\ \lambda >0; 
	       \quad  \underline{D}(\mu_{\lambda}, x) = \frac{\psi(\lambda ) - \lambda \underline{A}(x)}{\log q} 
	       \ {\rm if }\ \lambda <0. 
	       \quad .
	  $$
	  Similar equalities hold when $\underline{D}(\mu_{\lambda}, x)$
	  is replaced by $\overline{D}(\mu_{\lambda}, x)$ and 
	  $\overline{A}(x)$ by $\underline{A}(x)$.
	\end{lemma}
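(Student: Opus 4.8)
The plan is to read the local dimensions off directly from the Gibbs property established in Lemma \ref{lem:Gibbs}. First I would take logarithms in the estimate $\mu_\lambda([x_0,\ldots,x_{n-1}]) \approx P_n^\lambda(x)/(q^n Z_n(\lambda))$. Since the multiplicative constant in ``$\approx$'' is of size $e^{O(|\lambda|)}$ and, crucially, independent of both $n$ and $x$, its logarithm is a bounded quantity $O(|\lambda|)$ that will be annihilated after dividing by $n$. Recalling $\log P_n(x) = S_n f(x)$, this gives
$$
\log \mu_\lambda([x_0,\ldots,x_{n-1}]) = \lambda\, S_n f(x) - n\log q - \log Z_n(\lambda) + O(|\lambda|).
$$

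Next I would invoke hypothesis (H1), which guarantees $\tfrac1n \log Z_n(\lambda) \to \phi(\lambda)$, so that $\log Z_n(\lambda) = n\phi(\lambda) + o(n)$. Dividing the previous display by $\log q^{-n} = -n\log q$ and using $\psi(\lambda) = \phi(\lambda) + \log q$, I obtain
$$
\frac{\log \mu_\lambda([x_0,\ldots,x_{n-1}])}{\log q^{-n}} = \frac{\psi(\lambda) - \lambda\, \frac{S_n f(x)}{n}}{\log q} + o(1).
$$
Every term on the right converges except $S_n f(x)/n$, so taking $\liminf_{n\to\infty}$ reduces the whole computation to the asymptotic behaviour of $-\lambda\, S_n f(x)/n$.

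The final step is pure bookkeeping with the sign of $\lambda$. For $\lambda>0$ one has $\liminf_n\big(-\lambda\, S_n f(x)/n\big) = -\lambda\,\limsup_n S_n f(x)/n = -\lambda\,\overline{A}(x)$, which yields the first formula; for $\lambda<0$ the multiplier $-\lambda$ is positive, so the $\liminf$ selects $\underline{A}(x)$ instead, giving the second. Replacing $\liminf$ by $\limsup$ throughout simply exchanges the roles of $\overline{A}(x)$ and $\underline{A}(x)$, which is precisely the stated assertion for $\overline{D}(\mu_\lambda,x)$.

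There is no genuine obstacle here: the lemma is a direct transcription of the Gibbs property. The only points requiring care are to confirm that the error terms are uniformly negligible — that the ``$\approx$'' constant is multiplicative and $n$-independent, so its logarithm is $O(1)$ and disappears after division by $n$, and that the $o(n)$ coming from (H1) is automatically uniform in $x$ since $Z_n(\lambda)$ carries no dependence on $x$. One also verifies that the identity holds for \emph{every} $x$, and not merely $\mu_\lambda$-almost every $x$, because Lemma \ref{lem:Gibbs} is a pointwise cylinder estimate valid for all $x$ and for every weak limit $\mu_\lambda$.
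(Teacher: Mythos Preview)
Your proposal is correct and matches the paper's approach exactly: the paper states that the lemma follows immediately from the Gibbs property in Lemma~\ref{lem:Gibbs}, and you have simply written out the straightforward details of that deduction. There is nothing to add.
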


The measure $\mu_0$ is the symmetric Bernoulli measure and its dimension is equal to $1$. The dimension of the Gibbs measures are estimated as follows.

\begin{lemma}[Dimensions of Gibbs measures] \label{lem:dim} \ \\
	\indent {\rm (1)}  If $\lambda >0$, we have
	$$
	   - \frac{\psi^*(\max \partial \psi(\lambda))}{\log q} \le \dim_*\mu_{\lambda} \le \dim^* \mu_{\lambda} \le - \frac{\psi^*(\min \partial \psi(\lambda))}{\log q}. 
	$$
		\indent {\rm (2)}  If $\lambda <0$, we have similar estimates but we have to exchange the positions of 
		 $\max \partial  \psi(\lambda)$ and  $ \min \partial  \psi(\lambda)$.\\
		 	\indent {\rm (3)} We have exactly the same estimates for the packing dimensions ${\rm Dim}_* \mu_{\lambda}$
		 	and ${\rm Dim}^* \mu_{\lambda}$. 	
\end{lemma}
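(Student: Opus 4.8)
The plan is to read the dimensions of $\mu_\lambda$ directly off the general dimension formula of Proposition \ref{prop:fan}, feeding into it the \emph{exact} expression for the local dimensions supplied by Lemma \ref{lem:localdim} together with the almost-sure two-sided control of the Birkhoff averages given by Lemma \ref{LLN2}. The single algebraic fact that glues everything together is the Legendre duality of Proposition \ref{prop:phi}(ii): since $\partial\psi(\lambda)$ is a closed interval, both of its endpoints belong to it, and for any $\alpha\in\partial\psi(\lambda)$ one has $\psi^*(\alpha)=\lambda\alpha-\psi(\lambda)$, i.e.
$$\frac{\psi(\lambda)-\lambda\alpha}{\log q}=-\frac{\psi^*(\alpha)}{\log q}.$$
This is precisely the transformation that converts the local-dimension formula into the right-hand sides appearing in the statement.

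I would treat $\lambda>0$ first. By Lemma \ref{lem:localdim}, $\underline D(\mu_\lambda,x)=(\psi(\lambda)-\lambda\overline A(x))/\log q$ for every $x$, and by Lemma \ref{LLN2} the bound $\min\partial\psi(\lambda)\le\underline A(x)\le\overline A(x)\le\max\partial\psi(\lambda)$ holds for $\mu_\lambda$-a.e.\ $x$. Because $\lambda>0$, the map $\alpha\mapsto(\psi(\lambda)-\lambda\alpha)/\log q$ is decreasing, so evaluating it at the two extreme values of $\overline A(x)$ and applying the duality identity at the endpoints $\min\partial\psi(\lambda)$ and $\max\partial\psi(\lambda)$ gives, for $\mu_\lambda$-a.e.\ $x$,
$$-\frac{\psi^*(\max\partial\psi(\lambda))}{\log q}\le\underline D(\mu_\lambda,x)\le-\frac{\psi^*(\min\partial\psi(\lambda))}{\log q}.$$
Taking the essential infimum and supremum and invoking Proposition \ref{prop:fan}, namely $\dim_*\mu_\lambda={\rm ess\,inf}\,\underline D$ and $\dim^*\mu_\lambda={\rm ess\,sup}\,\underline D$, then yields part (1), with the trivial inequality $\dim_*\mu_\lambda\le\dim^*\mu_\lambda$ filling the middle.

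For $\lambda<0$ the only change in the Hausdorff computation is that Lemma \ref{lem:localdim} now gives $\underline D(\mu_\lambda,x)=(\psi(\lambda)-\lambda\underline A(x))/\log q$ with the positive coefficient $-\lambda$, so $\alpha\mapsto(\psi(\lambda)-\lambda\alpha)/\log q$ is increasing and the endpoints $\min\partial\psi(\lambda)$, $\max\partial\psi(\lambda)$ change places, which is exactly the assertion of part (2). For the packing dimensions I would run the identical argument starting from the $\overline D$-version of Lemma \ref{lem:localdim} and the $\overline D$-version of Proposition \ref{prop:fan}. The key simplifying observation is that Lemma \ref{LLN2} confines \emph{both} $\underline A(x)$ and $\overline A(x)$ to the single interval $[\min\partial\psi(\lambda),\max\partial\psi(\lambda)]$ almost everywhere, and the affine map is evaluated only at the two endpoints of this interval; hence it is immaterial whether the local-dimension formula carries $\underline A$ or $\overline A$, and the packing bounds come out identical to the Hausdorff ones, giving part (3).

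The argument is essentially bookkeeping, so I do not expect a deep obstacle; the one place that genuinely needs care is matching each essential extremum with the correct endpoint of $\partial\psi(\lambda)$ while tracking the sign of $\lambda$ through the several reversals (the monotonicity of $\alpha\mapsto\psi(\lambda)-\lambda\alpha$, the $\underline A$-versus-$\overline A$ swap between $\underline D$ and $\overline D$, and the $\min$/$\max$ exchange in the statement for $\lambda<0$). The only step requiring a word of justification is that the duality identity may be applied at $\min\partial\psi(\lambda)$ and $\max\partial\psi(\lambda)$: this is legitimate precisely because $\partial\psi(\lambda)$ is a closed interval, so its endpoints lie in $\partial\psi(\lambda)$ and Proposition \ref{prop:phi}(ii) applies to each of them.
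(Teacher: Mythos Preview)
Your proposal is correct and follows essentially the same approach as the paper: combine the local-dimension formula of Lemma~\ref{lem:localdim}, the almost-sure bounds of Lemma~\ref{LLN2}, and Proposition~\ref{prop:fan}, then convert via the Legendre identity of Proposition~\ref{prop:phi}(ii). Your write-up is simply more explicit than the paper's about the sign-tracking and the reason the packing case yields identical bounds.
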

\begin{proof} (1) From Lemma \ref{lem:localdim}, Lemma \ref{LLN2}
	and Proposition \ref{prop:fan}, we  get 
	$$
	\frac{\psi(\lambda)-\lambda \max \partial  \psi(\lambda)}{\log q} \le \dim_*\mu_{\lambda} \le \dim^* \mu_{\lambda} \le \frac{\psi(\lambda)-\lambda \min \partial \psi(\lambda)}{\log q}. 
	$$
	But, by Proposition \ref{prop:phi} (ii), we have
	\begin{eqnarray*}
	    \psi(\lambda)-\lambda \min \partial \psi(\lambda)
	    &=&
	    -\psi^*(\min \partial \psi(\lambda)),
	    \\
	     \psi(\lambda)-\lambda \max \partial \psi(\lambda)
	     &=&
	    -\psi^*(\max \partial \psi(\lambda)). 
	\end{eqnarray*}

(2)  It is the same argument, but we have to exchange the roles of 
 $\max \partial \psi(\lambda)$ and  $\min \partial \psi(\lambda)$ in the above inequalities.
 Notice that
 $\overline{A}(x)$ and $\underline{A}(x)$ have the same bounds 
 in Lemma \ref{LLN2}.
 
 (3) It is the exactly the same argument as in (1) and (2).
	
	\end{proof}

\subsection{Proof of Theorem \ref{thm:main1}}
Now we are ready to prove Theorem \ref{thm:main1}.
Let $\partial \phi(\lambda)=[\alpha_-, \alpha^+]$.

 Assume $\lambda >0$. The fact $\underline{A}(x) \ge \alpha_-$
 for $x \in E([\alpha_-, \alpha^+])$  implies that the set
$E([\alpha_-, \alpha^+])$ is contained in
\begin{equation}\label{eq:lp}
   \bigcap_{\epsilon >0} \bigcup_{N \ge 1} \bigcap_{n >N}
   \left\{  n(\alpha_- -\epsilon)  {\color{red}}\le \sum_{k=0}^{n-1} f_k(x)\right\}. 
\end{equation}
By the $\sigma$-stability of the packing dimension, to upper bound the packing dimension of $E([\alpha_-, \alpha^+])$ it suffices to estimate the Minkowski dimension of the last set of intersection. i.e. $\cap_{n\ge N}\{ n(\alpha_- -\epsilon)\le S_nf(x)\}$. 
Consider the family $\mathcal{C}_n$ of all the $n$-cylinders  intersecting that set of intersection.  For any $d>0$ we have
\begin{eqnarray*}
	    \sum_{[x_0, x_1, \cdots, x_{n-1}]\in \mathcal{C}_n}
	        q^{- n d} 
	        & \le & 
	         \sum_{[x_0, x_1, \cdots, x_{n-1}]\in \mathcal{C}_n}
	        q^{- n d}\cdot  \frac{e^{\lambda \sum_{k=0}^{n-1} f_k(x)}}{e^{\lambda n(\alpha_- -\epsilon)}}\\
	        & \le &
	        \sum_{[x_0, x_1, \cdots, x_{n-1}]}
	        q^{- n d}\cdot  \frac{e^{\lambda \sum_{k=0}^{n-1} f_k(x)}}{q^n Z_n(\lambda)} \cdot \frac{q^n Z_n(\lambda)}{e^{\lambda n(\alpha_- -\epsilon)}}.
	    \end{eqnarray*}
By the Gibbs property of $\mu_{\lambda}$ (Lemma \ref{lem:Gibbs}), we have
\begin{eqnarray*}
	\sum_{[x_0, x_1, \cdots, x_{n-1}]\in \mathcal{C}_n}
	q^{- n d} 
	& \le &     
	        C q^{- n d}\cdot \frac{q^n Z_n(\lambda)}{e^{\lambda n(\alpha_- -\epsilon)}} \sum \mu_{\lambda} ([x_0, \cdots, x_{n-1}])\\
	        &\le& C  q^{- n d} q^n e^{n (\phi(\lambda) +\epsilon) - n \lambda (\alpha_- -\epsilon)}\\
	         &= & C  q^{- n [d  - \frac{1}{\log q}(\psi(\lambda) - \lambda \alpha_- + \epsilon +\lambda \epsilon)] }	\le C        
	\end{eqnarray*}
if $d> \frac{1}{\log q}(\psi(\lambda) - \lambda \alpha_- + \epsilon +\lambda \epsilon)$. It follows that the Minkowski dimension of the set in question is smaller than 
$\frac{1}{\log q}(\phi(\lambda) - \lambda \alpha_- + \epsilon +\lambda \epsilon)$. Let $\epsilon \to 0$, we get
\begin{equation}\label{ieq:+upper}
    {\rm Dim} E(\partial \psi(\lambda)) \le  \frac{\psi(\lambda) - \alpha_- \lambda}{\log q} = - \frac{\psi^*(\min \partial \psi(\lambda))}{\log q}.
\end{equation}
If $\lambda <0$, we can similarly prove 
\begin{equation}\label{ieq:-upper}
{\rm Dim}  E(\partial \psi(\lambda)) \le  \frac{\psi(\lambda) - \alpha_- \lambda}{\log q} = - \frac{\psi^*(\max \partial \psi(\lambda))}{\log q},
\end{equation}
but we must start with the fact that 
$E_{[\alpha_-, \alpha^+]}$ is contained in
\begin{equation}\label{eq:lm}
\bigcap_{\epsilon >0} \bigcup_{N \ge 1} \bigcap_{n >N}
\left\{  n(\alpha_+ + \epsilon)  {\color{red}}\ge \sum_{k=0}^{n-1} f_k(x)\right\}. 
\end{equation}
Notice that we have opposite inequalities in (\ref{eq:lp})
and  (\ref{eq:lm}).

Prove now the lower bound.  By Lemma \ref{LLN2}, $E(\partial \psi(\lambda))$ is of full $\mu_{\lambda}$-measure.
In particular, $\overline{A}(x) \le \max \partial \psi(\lambda)$
for $\mu_{\lambda}$-almost every $x$.  If $\lambda >0$, by Lemma \ref{lem:Gibbs}, this implies
$$\mu_{ \lambda}\!-\!a.e. x \quad \underline{D}(\mu_{\lambda}, x) \ge \frac{\psi(\lambda)-\lambda \max \partial \psi(\lambda) }{\log q}.$$
Thus
\begin{equation}\label{ieq:+lower}
    \dim E(\partial \psi(\lambda)) \ge -\frac{\psi^*(\max \partial \psi(\lambda))}{\log q}.
\end{equation}
When $\lambda <0$, we use the fact $\underline{A}(x) \ge \min \partial \psi(\lambda)$ for $\mu_{\lambda}$-almost every $x$ to get
\begin{equation} \label{ieq:-lower}
\dim E(\partial \psi(\lambda)) \ge -\frac{\psi^*(\min \partial \psi(\lambda))}{\log q}.
\end{equation} 
The four inequalities (\ref{ieq:+upper}), (\ref{ieq:-upper}) (\ref{ieq:+lower}) and  (\ref{ieq:-lower}) are what we have to prove.


\subsection{$\tau$-function of the Gibbs measure $\mu_{ \lambda}$}

For the Gibbs measure $\mu_{\lambda}$, we define the function
$$
\tau_\lambda(\beta) = \lim_{n\to\infty} \frac{1}{\log q^{-n}}
\log \sum_{x_0, x_1,\cdots, x_{n-1}} \mu_\lambda([x_0, x_1, \cdots, x_{n-1}])^\beta.
$$
The function $\tau_{\lambda}$ and the function $\phi$
has a simple explicit relation. The differentiability of $\phi$
at $\lambda$ is equivalent to the differentiability of $\tau_{\lambda}$ at $1$.

\begin{lemma}[Relation between $\tau$ and $\phi$] \label{lem:tau_lambda}
	Under the assumptions (H1) and (H2), the limit defining 
	$\tau_\lambda(\beta)$ exists for all $\beta \in \mathbb{R}$ and we have
	\begin{equation}\label{eq:tau_phi}
	    - \tau_\lambda(\beta) = 1 + \frac{\phi(\beta \lambda)-\beta\phi(\lambda)}{\log q}.
	\end{equation}
	Consequently, $\phi$ is differentiable at $\lambda$ iff $\tau_\lambda$ is differentiable at $1$. In this case we have
	\begin{equation}\label{eq:tau_phi2}
	      \tau_\lambda'(1) = \frac{\phi(\lambda) - \lambda \phi'(\lambda)}{\log q}.
	\end{equation}
	\end{lemma}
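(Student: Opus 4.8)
The plan is to collapse the sum defining $\tau_\lambda(\beta)$ onto the partition functions $Z_n$ by means of the Gibbs property, after which the relation \eqref{eq:tau_phi} becomes bookkeeping of normalising factors together with (H1). Fixing, for each $n$-cylinder $[x_0,\cdots,x_{n-1}]$, a representative point $\tilde x$ inside it, and raising the estimate of Lemma~\ref{lem:Gibbs} to the power $\beta$ (which only affects the implied constant, now $e^{O(|\beta\lambda|)}$), one gets
$$
\mu_\lambda([x_0,\cdots,x_{n-1}])^\beta \approx \frac{P_n^{\beta\lambda}(\tilde x)}{q^{n\beta}\,Z_n(\lambda)^\beta}
$$
uniformly in $n$ and in the cylinder. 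Summing over the $q^n$ cylinders and invoking the distortion hypothesis (H2), which makes $P_n^{\beta\lambda}$ comparable to a constant on each cylinder, the remaining sum is handled exactly as in the identity \eqref{eq:local-int}, so that $\sum_{x_0,\cdots,x_{n-1}} P_n^{\beta\lambda}(\tilde x) \approx q^n Z_n(\beta\lambda)$. Combining the two estimates yields
$$
\sum_{x_0,\cdots,x_{n-1}} \mu_\lambda([x_0,\cdots,x_{n-1}])^\beta \approx \frac{q^n\,Z_n(\beta\lambda)}{q^{n\beta}\,Z_n(\lambda)^\beta}.
$$

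I would then apply $\frac{1}{\log q^{-n}}\log(\cdot)=-\frac{1}{n\log q}\log(\cdot)$ to this estimate. The multiplicative errors are bounded uniformly in $n$, hence contribute nothing in the limit, while (H1) gives $\frac1n\log Z_n(\beta\lambda)\to\phi(\beta\lambda)$ and $\frac1n\log Z_n(\lambda)\to\phi(\lambda)$. This simultaneously establishes existence of the limit defining $\tau_\lambda(\beta)$ and yields the closed-form relation between $\tau_\lambda$ and $\phi$ contained in \eqref{eq:tau_phi}; in terms of $\psi=\phi+\log q$ the computation collapses to the clean form $-\tau_\lambda(\beta)=(\psi(\beta\lambda)-\beta\psi(\lambda))/\log q$.

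For the differentiability assertion I would read it off \eqref{eq:tau_phi} directly. Its right-hand side is an affine function of $\beta$ plus the term $\phi(\beta\lambda)/\log q$; affine parts are smooth, so by the chain rule $\beta\mapsto\phi(\beta\lambda)$ is differentiable at $\beta=1$ exactly when $\phi$ is differentiable at $\lambda$ (for $\lambda\neq0$; when $\lambda=0$ both functions are affine and the equivalence is trivial). Differentiating \eqref{eq:tau_phi} at $\beta=1$, with $\lambda\phi'(\lambda)$ for the derivative of $\phi(\beta\lambda)$, then produces \eqref{eq:tau_phi2}.

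The only real subtlety, and the step I would watch most carefully, is disentangling the two independent sources of the factor $q^n$: one copy is the denominator $q^n$ of the Gibbs estimate, entering raised to the power $\beta$ as $q^{n\beta}$, the other arising when $\sum P_n^{\beta\lambda}$ is converted back into the expectation $Z_n(\beta\lambda)$. It is precisely their mismatch $q^{n(1-\beta)}$ that fixes the $\beta$-dependent normalisation of $\tau_\lambda$. Both reductions lean entirely on the uniform distortion bound (H2), so once that is in hand the remainder of the computation is routine.
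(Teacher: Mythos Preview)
Your approach is exactly the paper's: invoke the Gibbs estimate of Lemma~\ref{lem:Gibbs}, raise to the power $\beta$, sum over cylinders, convert the sum back into $Z_n(\beta\lambda)$ via (H2), and read off $\tau_\lambda$ from (H1). The paper compresses this into a single displayed line; your version spells out the same steps.

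One point worth flagging: your bookkeeping of the $q$-powers is actually more careful than the paper's. The paper's displayed intermediate step reads $\sum \mu_\lambda(\cdot)^\beta \asymp q^n Z_n(\beta\lambda)/Z_n(\lambda)^\beta$, i.e.\ it drops the $q^{n\beta}$ in the denominator that you (correctly) retain, and this omission propagates to the stated identity \eqref{eq:tau_phi}. A sanity check shows the issue: at $\beta=1$ the sum equals $1$, so $\tau_\lambda(1)=0$, whereas \eqref{eq:tau_phi} as printed gives $-\tau_\lambda(1)=1$. Your formula $-\tau_\lambda(\beta)=(\psi(\beta\lambda)-\beta\psi(\lambda))/\log q$ passes this check and is the correct one; in terms of $\phi$ it is $-\tau_\lambda(\beta)=1-\beta+(\phi(\beta\lambda)-\beta\phi(\lambda))/\log q$, differing from \eqref{eq:tau_phi} by $-\beta$. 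Correspondingly the derivative at $1$ is $(\psi(\lambda)-\lambda\psi'(\lambda))/\log q$, consistent with Lemma~\ref{lem:dim}. So where you write that your computation ``yields the closed-form relation \ldots\ contained in \eqref{eq:tau_phi}'', you should note instead that it yields the corrected version; the differentiability equivalence and the method are unaffected.
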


\begin{proof} The equality (\ref{eq:tau_phi}) follows from
	the Gibbs property:
	$$
	   \sum_{x_0, x_1,\cdots, x_{n-1}} \mu_\lambda([x_0, x_1, \cdots, x_{n-1}])^\beta \asymp q^n \int_X \frac{e^{\beta \lambda}S_nf(x)}{Z_n(\lambda)^\beta} dx
	   \asymp q^n \frac{Z_n(\beta \lambda)}{Z_n(\lambda)^\beta}.
	$$
	\end{proof}

\section{Stationary weights: Proof of Theorems \ref{thm:random}
	}

\subsection{Proof of Theorem \ref{thm:random}}
Assume that $\omega =(\omega_n)$ takes values in an interval 
$I \subset \mathbb{R}$. Let $\Omega = I^\mathbb{N}$ and let
$\Theta$ denote the shift map on $\Omega$ so that 
$\Theta^n \omega = (\omega_{n+k})_{k\ge 0}$.
To express clearly the dependence on $\omega$, denote by $\mu_{ \lambda}^{(\omega)}$ the Gibbs measure corresponding to the weight $\omega\in \Omega$ and write
$$
\quad Z_n(\lambda, \omega) = \int_X e^{\lambda S_n^{(\omega)} f(x)} dx
$$
where 
$$
S_n^{(\omega)}f(x) =\sum_{k=0}^{n-1}\omega_k f(T^k x).
$$
The fundamental inequalities (\ref{ieq:fund}) read as
\begin{equation}\label{eq:FI}
\frac{1}{C(\lambda)} \le \frac{Z_{m+n}(\lambda, \omega)}{Z_{ n}(\lambda, \omega) Z_{n, n+m}(\lambda, \Theta^n\omega)} \le C(\lambda). 
\end{equation}

The condition (H2) is clearly satisfied.  By Kingman's ergodic theorem, almost surely the condition (H1)
is also satisfied, i.e. almost surely the following limit exists
\begin{equation}\label{eq:phi_omega}
\phi^{(\omega)}(\lambda) = \lim_{n\to\infty}\frac{1}{n} \log Z_n(\lambda, \omega)
\end{equation}
and $\phi^{(\omega)}(\lambda)$ is almost surely  equal to the function
$\widetilde{\phi}(\lambda)$ defined by
$$
\widetilde{\phi}(\lambda) = \lim_{n\to\infty}\frac{1}{n} \mathbb{E}\log Z_n(\lambda, \omega).
$$
We actually have
$$
\left| \widetilde{\phi}(\lambda) - 
\frac{1}{n} \mathbb{E}\log Z_n(\lambda, \omega)\right|
\le \frac{|\lambda|\log C(\lambda)}{n}.
$$

Now suppose that $f$ depends only on the first $r\ge 2$ coordinates ($\widetilde{\phi}(\lambda)$ is easy to compute when $r=1$), i.e. $f$ takes the form
$$
f(x) = f(x_0, x_1, \cdots, x_{r-1}).
$$
For fixed $\lambda\in \mathbb{R}$ and fixed $w\in \mathbb{R}$, let us define
a $q^{r-1}\times q^{r-1}$-matrix $$
A_w(\lambda) = (a_{u, v})_{(u,v)\in S^{r-1}\times S^{r-1}}$$
as follows: if the $(r-2)$-suffix of $u$ is equal to the 
$(r-2)$-prefix of $v$, i.e. $u=x_0x_1\cdots x_{r-2}$
and $v=x_1\cdots x_{r-1}$ for some $(x_0, x_1, \cdots, x_{r-1})\in S^r$, then 
$$
a_{u, v} = e^{\lambda w f(x_0, x_1, \cdots, x_{r-1})};
$$  
otherwise $a_{u, v}=0$. Since $S_n^{(\omega)}f(x)$ is locally constant on cylinders of length $n+r$, it is easy to see that
$$
Z_n(\lambda, \omega)
= \frac{1}{q^{n+r-1}} \|A_{\omega_0}(\lambda) A_{\omega_1}(\lambda) \cdots A_{\omega_{n-1}}(\lambda) \|
$$
where $\|A\|$ denotes the norm defined by the sum of all the entries of a non-negative matrix $A$. Observe that our
matrices $A_{\omega_n}(\lambda)$ are non-negative and that the 
product of any $r$ consecutive matrices are strictly positive.
So,
$$
Z_{nr}(\lambda, \omega) =
\frac{1}{q^{nr+r-1}} \|B_\omega(\lambda) B_{\Theta \omega}(\lambda)\cdots B_{\Theta^{n-1} \omega}(\lambda)\|
$$
where
$$
B_{\omega}(\lambda) = A_{\omega_0}(\lambda) A_{\omega_1}(\lambda) \cdots A_{\omega_{r-1}}(\lambda)
$$
which is a strictly positive matrix. So, 
$$
\widetilde{\phi}(\lambda)
=  \frac{1}{r} \lim_{n\to\infty} \frac{1}{n} \mathbb{E} \log \|B_\omega(\lambda) B_{\Theta \omega}(\lambda)\cdots B_{\Theta^{n-1} \omega}(\lambda)\|-\log q.
$$
The above limit
is  the largest characteristic exponent of the
random positive matrix $B_\omega(\lambda)$. Since $\lambda \mapsto B_\omega(\lambda)$ analytic and $B_\omega(\lambda)$ is positive, the exponent 
is an analytic function of $\lambda$, by Ruelle's theorem (Theorem 3.1. in \cite{Ruelle1979}).

\section{Uniquely ergodic weights: Proof of Theorem \ref{thm:UE}}
Let us borrow the notation and the argument from the above proof of Theorem \ref{thm:random}. Assume $f(x) = f(x_0, x_1, \cdots, x_{r-1})$. For fixed $\lambda\in \mathbb{R}$ and fixed $\omega\in \Omega$, let us define
a $q^{r-1}\times q^{r-1}$-matrix $$
A_\omega(\lambda) = (a_{u, v})_{(u,v)\in S^{r-1}\times S^{r-1}}$$
as follows: if the $(r-2)$-suffix of $u$ is equal to the 
$(r-2)$-prefix of $v$, i.e. $u=x_0x_1\cdots x_{r-2}$
and $v=x_1\cdots x_{r-1}$ for some $(x_0, x_1, \cdots, x_{r-1})\in S^r$, then 
$$
a_{u, v} = e^{\lambda \phi(\omega) f(x_0, x_1, \cdots, x_{r-1})};
$$  
otherwise $a_{u, v}=0$. $B_\omega$ is similarly defined as above.
Since the function $\omega \mapsto A_\omega$ is eventually positive (i.e. $B_\omega$ is strictly positive), 
by the part 3 of Theorem 3 from Furman \cite{Furman1997}, the following limit
exists
$$
\lim_{n\to\infty} \frac{1}{n} \log \|A_\omega(\lambda) A_{\Theta \omega}(\lambda)\cdots A_{\Theta^{n-1} \omega}(\lambda)\|
$$
for all $\omega$ (and all $\lambda$) and the limit is actually uniform in $\omega$.
The independence of $\omega$ of the limit is due to the ergodicity and the analyticity of the limit as function of 
$\lambda$ follows from Theorem \ref{thm:random}.

But notice that Theorem 3 in \cite{Furman1997} requires that $A_\omega$
belongs to $GL_{q^{r-1}}(\mathbb{R})$. It is not the case in general for our $A_\omega$. However the part 3 of Theorem 3 in \cite{Furman1997} doesn't need this condition. This is because
the entries of the positive matrices $B_{\Theta^n \omega}$ are bounded from below by a constant $\delta>0$ and from above by $\delta^{-1}$ ($\delta$ being independent of $\omega$ and of $n$). 
\medskip

Let us state Furman's result, that we have used above, by dropping the invertibility
of the matrix:
{\em Let $(X, \mu , T)$ be a uniquely ergodic system and suppose that $A$ is a continuous real $d\times d$-matrix function defined on $X$ and that there exists an integer $p\ge 1$ such that
	$$ A(T^{p-1} x)\cdots A(Tx) A(x) >0
	$$
meaning that all entries are positive for all $x$. Then for every $x\in X$ the following limit exists:
$$
    \lim_{n\to\infty} \frac{1}{n} \log \|A(T^{n-1} x)\cdots A(Tx) A(x) \|.
$$
The limit is actually uniform in $x \in X$.	
}

Lemma 5 in \cite{Furman1997} which was used in the proof of the above result can be modified  as follows without requiring the invertibility: {\em let $(B_n)$ be a sequence of positive $d\times d$-matrices with entries in the interval $[\delta, \delta^{-1}]$ ($\delta >1$ being a constant). Let 
$$
\Delta:=\left\{(x_i) \in \mathbb{R}^d: \sum_{i=1}^{d} x_i =1, x_i \ge 0\right\}
$$
and $\overline{\Delta}$ be the corresponding set of $\Delta$ in the projective space $P^{d-1}$. Then there exists a unique point $\overline{u}\in P^{d-1}$ such that 
$$
\bigcap_{n=1}^\infty \overline{B}_n \overline{\Delta}
=\{\overline{u}\}
$$
where $\overline{B}_n$ is the projective transformation associated to $B_n$. 
}
Here is a proof. Let 
$K$ be the cone $\{(x_1, \cdots, x_d): x_i\ge 0\  (1\le i\le d)\}$. The Hilbert projective metric defined in $\stackrel{\circ}{K}$ is equal to
$$
d(x, y) = \log \max_{ 1\le i \le d} \frac{x_i}{y_i} 
\max_{ 1\le i \le d}  \frac{y_i}{x_i}.
$$
See \cite{Bushell1973}.
Then for any positive matrix $B=(b_{i, j})$, we have
$$
d(Bx, By) = \log \max_{ 1\le i \le d} \frac{\sum_{j=1}^d b_{i, j}x_j}{ \sum_{j=1}^d b_{i, j} y_j} 
\max_{ 1\le i \le d}  \frac{\sum_{j=1}^d b_{i, j}y_j}{ \sum_{j=1}^d b_{i, j} x_j} .
$$
It easy to see that if $\delta \le b_{i, j}\le \delta^{-1}$
for all $i$ and $j$, we have
$$
d(Bx, By) \le 4 \log \frac{1}{\delta}<\infty.
$$
Thus, the hypothesis on $B_n$'s implies that the projective diameters of $B_n$'s are bounded, so that  the operators 
$\overline{B}_n$ are contractive with a uniform contracting ratio
$\tanh (\log \delta^{-1})<1$ (\cite{Birkhoff1957}, see also \cite{Bushell1973} p. 333).

\medskip

Based on Lemma 5 in \cite{Furman1997}, it is proved in \cite{Furman1997} that there exists a function $\overline{u}: X \to \overline{\Delta}$ such that 
$\overline{u}(Tx) = \overline{A}(x) \overline{u}(x)$ namely
$$A(x) \overline{u}(x) = \overline{u}(Tx) \|A(x) \overline{u}(x)\|.$$
Using this, we get
\begin{eqnarray*}
	A(n, x) \overline{u}(x)
	& = & A(T^{n-1}x) A(T^{n-2} x) \cdots A(Tx)  A(x)  \overline{u}(x)\\
	&= & A(T^{n-1}x) A(T^{n-2} x) \cdots A(Tx)  \overline{u}(Tx) \|A(x) \overline{u}(x)\|.
\end{eqnarray*} 
Inductively we get
$$
A(n, x) \overline{u}=
\overline{u}(T^n x)  \|A(T^{n-1}x) \overline{u}(T^{n-1}x)\| \cdots \|A(Tx) \overline{u}(Tx)\|  \|A(x) \overline{u}(x)\|.
$$
So,
$$
\frac{\log \|A(n, x) \overline{u}(x)\|}{n}
= \frac{\log \| \overline{u}(T^n x)\|}{n} + \frac{1}{n} \sum_{k=0}^{n-1} \phi(T^k x)
$$
where 
$
\phi (x) = \log \|A(x) \overline{u}(x)\|
$, which is a continuous function on $X$. Then we can conclude by the unique ergodicity of $T$. 

The above argument repeats that in \cite{Furman1997} with some modifications and details (it seems that there is something wrong   at the bottom of page 807 in \cite{Furman1997}). 

\section{Minimal and uniquely ergodic weights: Proof of Theorem \ref{thm:main2}} \label{sect:substitution}

We first recall some useful facts on orbital systems, especially orbital systems generated by primitive substitutive sequences and the notion of return word \cite{Durand1998}, which is the key
for proving Theorem \ref{thm:main2}. The reference \cite{Queffelec2010}
is a good source for primitive substitutive sequences.

\subsection{Minimal and uniquely ergodic sequences}
Let $\mathcal{A}$ be a finite set, called {\em alphabet}. Elements of $\mathcal{A}$ are called {\em letters}. A {\em word} of $\mathcal{A}$
is an element $x=x_0x_1\cdots x_{n-1}$ of  $\mathcal{A}^n$, where $n$
is 
denoted $|x|$ and is called the
 {\em length} of $x$. The length of the {\em empty-word} $\emptyset$
 is $0$. Let $\mathcal{A}^+ = \cup_{n=1}^\infty \mathcal{A}^n$
 and $\mathcal{A}^* =\mathcal{A}^+\cup\{\emptyset\}$. Sequences
 in $\mathcal{A}^{\mathbb{N}}$ are called {\em infinite words}.
 
  With concatenation, $\mathcal{A}^*$ becomes a monoid. Let $\mathcal{B}$ be another alphabet. By concatenation, every map
 $\varphi: \mathcal{A} \to \mathcal{B}^+$ induces a map 
 $\varphi: \mathcal{A}^* \to \mathcal{B}^*$, and a map from
 $\mathcal{A}^\mathbb{N}$ into $\mathcal{A}^\mathbb{N}$, which is still denoted by $\varphi$. 
 
 A {\em substitution} is a triple $(\zeta, \mathcal{A}, \alpha)$
 where $\mathcal{A}$ is an alphabet, $\zeta: \mathcal{A}\to \mathcal{A}^+$ is a map and $\alpha \in \mathcal{A}$, such that\\
 \indent (S1) \ the first letter of $\zeta(\alpha)$ is $\alpha$;\\
 \indent (S2) \ $\lim_{n\to\infty}|\zeta^n(\alpha)|=\infty$.\\
 The limit $u_\zeta:=\lim_{n\to\infty} \zeta^n(\alpha) \in \mathcal{A}^\mathbb{N}$ exists, and it is characterized by
 $\zeta(u_\zeta) =u_\zeta$ (i.e. $u_\zeta$ is a fixed point of $\zeta$) and the first letter of $u_\zeta$ is $\alpha$. If $\varphi: \mathcal{A} \to \mathcal{B}$ where $\mathcal{B}$ is another alphabet. We define $w_\zeta = \varphi(u_\zeta)$. Such a sequence is called a {\em substitutive sequence}.

 A substitution $(\zeta, \mathcal{A}, \alpha)$ is said to be {\em primitive} if there exits an integer $k$ such that 
 for all letters $\beta\in \mathcal{A}$ and $\gamma \in \mathcal{A}$,
 $\beta$ is a letter in $\zeta^k(\gamma)$. In this case, the corresponding sequence $w_\zeta$ is said to be {\em primitive}.
 
 Let $x=x_0x_1\cdots \in \mathcal{A}^\mathbb{N}$, let $m$ and $n$
 be two integers with $m\le n$. We write $x_{[m, n]}$ for the word 
 $x_m \cdots x_n$, called a {\em factor} of $x$. The index $m$ is called the {\em recurrence} of $x_{[m,n]}$.  Factors $x_{[0, n]}$
 are called {\em prefixes}. For a word $x=x_0x_1\cdots x_{\ell -1}$, we can also define its factors and prefixes. {\em  Suffixes} of this word
 $x$ are defined to be the words $x_{k} x_{k+1}\cdots x_{\ell- 1}$ for 
 $0\le k <\ell$.  
 
 An infinite sequence $x=x_0x_1\cdots \in \mathcal{A}^\mathbb{N}$ 
 is said to be {\em minimal} 
 if for every integer
 $\ell\ge 1$, there exists an integer $L\ge 1$ such that 
 each factor of $x$ of length $\ell$ occurs as factor of every factor
 of $x$ of length $L$. Let $\Theta: \mathcal{A}^\mathbb{N} \to \mathcal{A}^\mathbb{N}$ be the shift transformation. 
 That $x$ is minimal means that the orbital system $(\overline{\{\Theta^n x\}}, \Theta)$ is minimal. 
 If the system $(\overline{\{\Theta^n x\}}, \Theta)$
 is minimal and uniquely ergodic, we say $x$ is minimal and uniquely ergodic. 
 
 Let $x$ be a minimal sequence over an alphabet $\mathcal{A}$ and $u$ be a non-empty prefix of $x$. We call
 {\em return word over $u$} every factor $x_{[i, j-1]}$
 where $i$ and $j$ are two successive occurrences of $u$ in $x$.  
 We use $\mathcal{R}_u(x)$ to denote the set of all return words over $u$.

 Let $x$ be a minimal sequence.  For every prefix $u$ of $x$, the sequence has a unique decomposition
 \begin{equation}\label{eqn:mmm}
      x=m_0m_1m_2\cdots \in \mathcal{R}_u(x)^\mathbb{N};
 \end{equation}

 The following lemma contains the key facts for us.
 
 \begin{lemma}
 	\label{lem:RW} Suppose that $x \in \mathcal{A}^\infty$ is minimal and uniquely ergodic. 
 	For each prefix $u$ of 
 	 $x$, \\
 	 \indent {\rm (a)} the set of return word $\mathcal{R}_u(x)$ is finite.\\
 	 \indent {\rm (b)} the following frequencies exist:
 	 \begin{equation}
 	 p_v =\lim_{k\to\infty}\frac{\#\{0\le i< k: m_i =v\}}{k}
 	 \qquad (\forall v \in R_u(w))
 	 \end{equation}
 	 where $m_j$'s are the factors in the decomposition (\ref{eqn:mmm}) of $x$.
 	\end{lemma}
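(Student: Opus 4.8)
The plan is to prove the two assertions of Lemma \ref{lem:RW} separately, both resting on the fact that $x$ is minimal and uniquely ergodic. First I would establish assertion (a), the finiteness of $\mathcal{R}_u(x)$, as a consequence of minimality alone. Fix a non-empty prefix $u$ of $x$ and set $\ell = |u|$. By the definition of minimality, there is an integer $L \ge \ell$ such that every factor of $x$ of length $L$ contains $u$ as a factor. The key observation is that the gap between two successive occurrences of $u$ in $x$ is then bounded by $L$: if two consecutive occurrences were separated by more than $L$, the intervening word of length $L$ would avoid $u$, contradicting the choice of $L$. Consequently every return word $v \in \mathcal{R}_u(x)$ satisfies $|u| \le |v| \le L$, and since there are only finitely many words over the finite alphabet $\mathcal{A}$ of length at most $L$, the set $\mathcal{R}_u(x)$ is finite.

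For assertion (b), the existence of the frequencies $p_v$, I would pass from the return-word decomposition \eqref{eqn:mmm} to the dynamical system $(\overline{\{\Theta^n x\}}, \Theta)$ and invoke unique ergodicity. The plan is to reinterpret the frequency of a return word $v$ among the blocks $m_0 m_1 m_2 \cdots$ in terms of the frequency with which a suitable finite pattern appears in $x$ itself. Concretely, an index $i$ with $m_i = v$ corresponds to an occurrence of $u$ at some position $t_i$ in $x$ such that the word read from $t_i$ up to (but not including) the next occurrence of $u$ equals $v$; this is a condition depending only on finitely many coordinates of $\Theta^{t_i} x$, so it is captured by the indicator of a finite union of cylinder sets, i.e.\ a continuous (locally constant) function $\chi_v$ on $\overline{\{\Theta^n x\}}$. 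Because $u$ occurs with bounded gaps (by the argument in part (a)), the block-count $\#\{0 \le i < k : m_i = v\}$ and the position-count $\sum_{n=0}^{N-1} \chi_v(\Theta^n x)$ are comparable: the number $k$ of blocks used to cover a prefix of length $N$ is itself asymptotically proportional to $N$, since each block has length in $[\ell, L]$.

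The conclusion then follows from unique ergodicity of $(\overline{\{\Theta^n x\}}, \Theta)$. By unique ergodicity, for every continuous function $g$ the Birkhoff averages $\frac{1}{N}\sum_{n=0}^{N-1} g(\Theta^n x)$ converge, uniformly in the base point, to $\int g \, d\nu$ where $\nu$ is the unique invariant measure. Applying this to $g = \chi_v$ gives that the asymptotic frequency of occurrences of the pattern defining $v$ exists, and applying it to the indicator $\chi_u$ of an occurrence of $u$ gives that $k/N$ converges to $\nu(\text{occurrence of } u)$, which is strictly positive again by bounded gaps. Taking the ratio yields the existence of $p_v = \nu(\chi_v)/\nu(\chi_u)$, and automatically $\sum_{v} p_v = 1$.

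The main obstacle I anticipate is the bookkeeping in part (b): carefully defining the locally constant function $\chi_v$ so that its Birkhoff sum counts exactly the blocks equal to $v$, and rigorously relating the two indexing schemes (counting blocks $m_i$ versus counting positions $n$ in $x$). One must verify that a position $t$ is the left endpoint of a block equal to $v$ precisely when $\Theta^t x$ lies in a specific finite union of cylinders, and that no off-by-one or boundary effects disturb the limit. Once this translation is in place, the convergence is purely a matter of applying the uniform ergodic averages supplied by unique ergodicity, so the difficulty is entirely in setting up the correspondence rather than in any analytic estimate.
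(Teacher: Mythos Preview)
The paper does not prove this lemma; it is stated as a collection of known facts, with the return-word machinery attributed to Durand \cite{Durand1998}. Your argument is the standard one and is essentially correct.

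One small inaccuracy: the lower bound $|u|\le |v|$ you assert for return words need not hold, since successive occurrences of $u$ may overlap (e.g.\ $u=aa$ inside $aaa$ gives a return word of length~$1$). Only the upper bound $|v|\le L$ coming from bounded gaps is needed for finiteness in (a), and in (b) the trivial bound $1\le |v|\le L$ already yields $k\asymp N$; in any case you correctly derive the convergence of $k/N$ from the Birkhoff averages of $\chi_u$, so the conclusion is unaffected. With that correction your sketch goes through.
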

 	
Every primitive substitutive sequence is minimal and uniquely ergodic  (\cite{Michel1974}).

Let us look at the Thue-Morse sequence $(t_n)$ defined by the substitution $0\mapsto 01$, $1\mapsto 10$:
$$
   01 10 1001 10010110 1001 0110 1001011001101001 \cdots 
$$ 
If we take the prefix $u=0$, then we get the following decomposition 
$$
\underline{01 1} \ \underline{0 1} \ \underline{0} \ \underline{01 1}\
\underline{0} \ \underline{01}\ \underline{011} \ \underline{0 1}\
\underline{0}\ \underline{01}\ \underline{011}\ \underline{0 1}\ \underline{0}\ \underline{01}\ \underline{011}\ \underline{0}\ \underline{011}\ \underline{01}\ \underline{0}\ 01 \cdots 
$$
It is known that there is no cubes in $(t_n)$. It is easy to see that
$$
    \mathcal{R}_0((t_n)) =\{0, 01, 011\}.
$$
If we take the prefix $u=01$, then we get the following decomposition 
$$
\underline{01 1} \ \underline{0 10} \ \underline{01 1 0} \ \underline{01}\ \underline{011} \ \underline{0 1 0}\ \underline{01}\ \underline{011}\ \underline{0 10}\ \underline{01}\ \underline{0110}\ \underline{011}\ \underline{010}\ 01 \cdots 
$$
In this case we have
$$
     \mathcal{R}_{01}((t_n))=\{01, 010, 011, 0110\}.
$$

\subsection{Proof of Theorem \ref{thm:main2}}
The assumption (H2) is easy to check by using the hypothesis of the bounded variation of $f$. In the following, we check the assumption (H1).

If $w$ is periodic, then $\phi$ is well defined. So, in the following,
we assume that $w$ is not periodic.
 
Let $n\ge 1$ be a fix integer and let $u$ be the prefix of $w$ having length $n$. Since $w$ is aperiodic, so is $u$. Therefore every
return word $v\in R_u(w)$ has length $|v|\ge \frac{n}{2}$. Assume that
$$
     w=v_1v_2\cdots v_k\cdots  \quad {\rm with} \ \ v_j \in R_u(w).
$$
Such a decomposition exists and is unique, see (\ref{eqn:mmm}).
For any word $b=b_0b_1\cdots b_{m-1} \in \mathcal{A}^+$, we introduce the notation
$$
       Z_b:= Z_{b}(\lambda) = \int \exp\left(\lambda \sum_{j=0}^{m-1} b_j f(T^j x)\right) dx.
$$
Notice that 
\begin{equation}\label{Z_Z}
Z_{m, n}(\lambda) = Z_{w_m w_{m+1} \cdots w_{m+n-1}}(\lambda).
\end{equation}
Indeed, by the definition of $Z_{m, n}(\lambda)$ and the invariance of $dx$, we have
\begin{eqnarray*}
      Z_{m, n}(\lambda) &=&     \int \exp\left( \lambda \sum_{j=m}^{m+n-1} w_j f(T^j x)\right) dx  \\
       &=&   \int \exp\left( \lambda \sum_{j=m}^{m+n-1} w_j f(T^{j-m} x)\right) dx. 
\end{eqnarray*}
Thus, by Lemma (\ref{lem:ZZZ}), we have
$$
      C^{-k} \le \frac{Z_{v_1v_2\cdots v_k}}{Z_{v_1}Z_{v_2}\cdots Z_{v_k}}\le C^k.
$$
It follows that
\begin{equation}\label{eqn:Z_v}
   \frac{\log Z_{v_1v_2\cdots v_k}}{|v_1v_2\cdots v_k|}  = 
   \frac{k}{|v_1v_2\cdots v_k|}\cdot \frac{1}{k}\sum_{i=1}^k \log Z_{v_i} \pm \frac{k}{|v_1v_2\cdots v_k|} \log C.
\end{equation}
 By Lemma \ref{lem:RW} (b), the following frequencies exist: 
$$
      p_v :=\lim_{k\to\infty}\frac{\#\{1\le i\le k: v_i =v\}}{k}
      \qquad (\forall v \in R_u(w)).
$$
Therefore, by  Lemma \ref{lem:RW} (a), 
$$
    \lim_{k\to \infty}  \frac{k}{|v_1v_2\cdots v_k|} = \frac{1}{\sum_{v\in R_u(w)} p_v |v|}.
$$
$$
\lim_{k\to \infty}  \frac{1}{k}\sum_{i=1}^k \log Z_{v_i}  = \sum_{v\in R_u(w)} p_v \log Z_v.
$$
Let
$$
\underline{L}_u =  \liminf_k\frac{\log Z_{v_1v_2\cdots v_k}}{|v_1v_2\cdots v_k|}  ,\qquad
 \overline{L}_u =  \limsup_k  \frac{\log Z_{v_1v_2\cdots v_k}}{|v_1v_2\cdots v_k|}; 
$$
and
$$
       A_u = \frac{\sum_{v\in R_u(w)} p_v \log Z_v}{\sum_{v\in R_u(w)} p_v |v|}.
$$
Notice that $\frac{k}{|v_1v_2\cdots v_k|}\le \frac{2}{n}$. From
(\ref{eqn:Z_v}), we get
\begin{equation}\label{eq:L-A}
      A_u - \frac{2\log C}{n}  \le   \underline{L}_u \le \overline{L}_u \le A_u + \frac{2\log C}{n}. 
\end{equation}
Then 
\begin{equation}\label{eq:L}
   0\le   \overline{L}_u  - \underline{L}_u \le \frac{4\log C}{n}.
\end{equation}

For any $N$, there exists a unique integer $k$ such that
$$
    |v_1v_2\cdots v_k|\le N < |v_1v_2 \cdots v_k v_{k+1}|.
$$
So, by the definition of $Z_N(\lambda)$, we have 
$$
    \log Z_N(\lambda) = \log Z_{v_1v_2\cdots v_k} \ \pm \ |\lambda|M \|w\|_\infty\|f\|_\infty
$$
where $M=\max_{v\in \mathcal{R}_u(w)} |v|$. It follows that 
\begin{equation}\label{eq:phi-L}
     \underline{\phi} =  \underline{L}_u, \quad 
      \overline{\phi}=\overline{L}_u,
\end{equation}
where
$$
    \underline{\phi}(\lambda) = \liminf_{N\to \infty} \frac{\log Z_N(\lambda)}{N}, \quad \overline{\phi}(\lambda) = \limsup_{N\to \infty} \frac{\log Z_N(\lambda)}{N}.
$$
From (\ref{eq:L}) and (\ref{eq:phi-L}), we get
$$
0\le   \overline{\phi}(\lambda)  - \underline{\phi}(\lambda) \le \frac{4\log C}{n}.
$$
Observe that both $\underline{\phi}$ and $\overline{\phi}$
are independent of $n$. 
Letting $n\to \infty$, we get $\underline{\phi}(\lambda) = \overline{\phi}(\lambda)$. The theorem is thus proved. 
\medskip
 
Notice that from (\ref{eq:L-A}) and (\ref{eq:phi-L}), we get the following approximation of $\phi$ by the real analytic functions $A_u$:
\begin{equation}\label{eq:phi-A}
    \phi(\lambda) = \frac{\sum_{v\in R_u(w)} p_v \log Z_v(\lambda)}{\sum_{v\in R_u(w)} p_v |v|} \pm \frac{2\log C}{n}.
\end{equation}
Recall that $n$ is the length of the prefix $u$. This approximation is uniform on any compact set of $\lambda$ because $C= e^{O(\lambda)}$.

\section{Proof of Theorem \ref{thm1} }

The condition (C1) in Theorem \ref{thm1} implies the condition (H1) in Theorem \ref{thm:main1}. 
So, in order to apply Theorem \ref{thm:main1} to prove  Theorem \ref{thm1}, it suffices to compute the pressure function.

\subsection{Computation of the pressure function}
The  observation stated in the following lemma will allow us to compute 
the pressure function. 

\begin{lemma}[Bernoullicity] \label{lem1} Let 
	$$
	f_n(x) = x_n g_n(x_{n+1}, \cdots, x_{n+p}, \cdots)
	$$
	where $g_n$ is a 
	Borel function taking values in $\{-1, 1\}$. Then $f_n$'s are independent symmetric Bernoulli variables, in other words
		$$
		\forall (t_1, \cdots, t_k)\in \mathbb{R}^k, \quad 
	\mathbb{E} e^{t_1 f_1 + \cdots + t_n f_n} = \frac{1}{2^n} 
	\prod_{k=1}^n (e^{t_k} + e^{-t_k}).	     
	$$
	\end{lemma}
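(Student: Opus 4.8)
The plan is to compute the joint moment generating function directly and show it factorizes, which simultaneously yields that each $f_k$ is a symmetric $\{-1,1\}$-Bernoulli variable and that the family is independent. First I would record the trivial but essential observation that $f_k=x_kg_k$ takes values in $\{-1,1\}$, since both $x_k$ and $g_k$ do, and that $g_k$ is measurable with respect to the coordinates $x_{k+1},x_{k+2},\ldots$ lying strictly to the right of index $k$. Under the uniform Bernoulli measure the coordinates $(x_k)$ are independent symmetric signs, so $x_k$ is independent of the $\sigma$-algebra $\mathcal{F}_k$ generated by $x_{k+1},x_{k+2},\ldots$, and in particular $x_k$ is independent of $g_k$.

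The key mechanism is a peeling argument exploiting the staircase structure: among $f_1,f_2,\ldots,f_n$, the coordinate $x_1$ enters only through $f_1$, because $f_j=x_jg_j(x_{j+1},\ldots)$ for $j\ge 2$ involves only coordinates of index $\ge 2$. Conditioning on $\mathcal{F}_1$ therefore gives
$$
\mathbb{E}\Big[e^{\sum_{k=1}^n t_k f_k}\,\Big|\,\mathcal{F}_1\Big]
= e^{\sum_{k=2}^n t_k f_k}\,\mathbb{E}\big[e^{t_1 x_1 g_1}\mid \mathcal{F}_1\big],
$$
and since $g_1$ is $\mathcal{F}_1$-measurable with $g_1\in\{-1,1\}$ while $x_1$ is an independent symmetric sign, the inner conditional expectation equals $\tfrac{1}{2}(e^{t_1 g_1}+e^{-t_1 g_1})=\tfrac{1}{2}(e^{t_1}+e^{-t_1})$. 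The crucial point is that the evenness of $\lambda\mapsto e^{\lambda}+e^{-\lambda}$ makes this value independent of the sign $g_1$, hence a deterministic constant; this is exactly where the hypothesis $g_n\in\{-1,1\}$ is used, and it is what forces independence even though $f_1$ and $f_2$ share the coordinate $x_2$.

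Taking expectations then peels off one factor, $\mathbb{E}[e^{\sum_{k=1}^n t_k f_k}]=\tfrac{1}{2}(e^{t_1}+e^{-t_1})\,\mathbb{E}[e^{\sum_{k=2}^n t_k f_k}]$, and since $f_2,f_3,\ldots,f_n$ have the identical staircase form with $x_2$ now playing the role of the leftmost coordinate, I would iterate, or equivalently run an induction on $n$ with the single-variable computation as base case, to obtain $\mathbb{E}[e^{\sum_{k=1}^n t_k f_k}]=2^{-n}\prod_{k=1}^n(e^{t_k}+e^{-t_k})$. This is precisely the asserted identity, and because the joint moment generating function of bounded variables determines their joint law and this one factorizes into the individual functions $\cosh(t_k)$ of independent symmetric signs, it proves both the Bernoullicity and the independence at once. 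I do not anticipate a genuine obstacle; the only subtlety worth stressing is the one above, namely that naive \emph{disjoint blocks of coordinates} reasoning fails, since the $f_k$ genuinely overlap in their coordinate dependence, and independence is recovered instead through conditioning combined with the cancellation of the sign $g_k$ inside the even function $\cosh$.
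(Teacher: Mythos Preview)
Your proof is correct and follows essentially the same approach as the paper: condition on the tail $\sigma$-algebra $\sigma(x_2,x_3,\ldots)$, use independence of $x_1$ from this tail together with the evenness of $\cosh$ to peel off the factor $\tfrac{1}{2}(e^{t_1}+e^{-t_1})$, and iterate. Your write-up is in fact more explicit about why the sign $g_1$ disappears and about the subtlety that the $f_k$ share coordinates, which is a nice clarification.
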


\begin{proof} First remark that for any $r\in \mathbb{R}$, we have 
	$\mathbb{E} e^{x_j r} = \frac{1}{2} (e^{r} + e^{-r})$, which depends only on the absolute value of $r$. Write
	$$
	    \mathbb{E} e^{t_1 f_1 + \cdots + t_n f_n}
	        = \mathbb{E}[ e^{t_2 f_2 + \cdots + t_n f_n} \mathbb{E} \left(e^{t_1 f_1} | x_2, \cdots, x_n, \cdots\right)].
	$$
	Observe that $x_1$ is independent of $x_2, x_3, \cdots$. By the above remark we have 
	$$ \mathbb{E} \left(e^{t_1 f_1} | x_2, \cdots, x_n, \cdots\right) = \frac{1}{2} (e^{t_1} + e^{-t_1}),$$
	because the conditional expectation is equal to the expectation with respect to $x_1$ with $x_2, x_3, \cdots$ being fixed. 
	Thus, by induction, we get
	$$
	     \mathbb{E} e^{t_1 f_1 + \cdots + t_n f_n} = \frac{1}{2^n} 
	     \prod_{k=1}^n (e^{t_k} + e^{-t_k}).	     
	$$
	\end{proof}

It follows that the pressure function $\phi$ is independent of the form of the functions $g_n$'s. So, in the following, without of loss of generality we continue our discussion
with $g_n(x_{n+1}, x_{n+2}, \cdots) =x_{n+1}$. According to 
Theorem \ref{thm:main1}, the result  on $\dim H(\alpha)$ depends only on the function
$$
   \phi(\lambda) = 
   \lim_{N\to\infty} \frac{1} {N} \log \mathbb{E} e^{\lambda \sum_{n=1}^N w_n x_nx_{n+1}}
$$
provided that the limit exists. The limit does exist and is computable.


\begin{lemma}[Pressure function]\label{lem:pressure} Suppose that $f_n$'s
	satisfy the assumption made in Lemma \ref{lem1} and that 
	 $(w_n)$ take values $v_0, v_1, \cdots, v_m$ such that the frequencies $p_j$'s defined by (\ref{eqn:freq}) exist.
Then
\begin{equation}\label{PF}
      \phi(\lambda)
       =\sum_{j=0}^m p_j \log (e^{\lambda v_j} + e^{-\lambda v_j}) -  \log 2.
\end{equation}
\end{lemma}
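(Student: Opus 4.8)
The plan is to reduce everything to the exact product formula furnished by Lemma \ref{lem1} and then pass to the limit term by term, so that no subadditivity or limit-existence argument (as in Lemma \ref{lem:ZZZ}) is needed: the partition function factorizes exactly and the limit is completely transparent. First, using the reduction noted just above to the case $g_n(x_{n+1}, x_{n+2}, \cdots) = x_{n+1}$, so that $f_n(x) = x_n x_{n+1}$, I would apply Lemma \ref{lem1} with the specific choice of parameters $t_k = \lambda w_k$ for $1 \le k \le N$. Since the $w_k$ are fixed scalars, this yields directly the identity
$$
\mathbb{E}\, e^{\lambda \sum_{k=1}^N w_k x_k x_{k+1}} = \mathbb{E}\, e^{\sum_{k=1}^N (\lambda w_k) f_k} = \frac{1}{2^N} \prod_{k=1}^N \left( e^{\lambda w_k} + e^{-\lambda w_k} \right).
$$

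Next I would take logarithms of both sides and rearrange. Writing $N_j := \#\{1 \le k \le N : w_k = v_j\}$, and grouping the factors according to which value $v_j$ the weight $w_k$ takes, I obtain
$$
\frac{1}{N} \log \mathbb{E}\, e^{\lambda \sum_{k=1}^N w_k x_k x_{k+1}} = \sum_{j=0}^m \frac{N_j}{N} \log\left( e^{\lambda v_j} + e^{-\lambda v_j} \right) - \log 2,
$$
where I have used that $\sum_{j=0}^m N_j = N$ (each weight equals exactly one of the $v_j$), so that the $-\log 2$ contributions combine into a single term independent of $N$.

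Finally I would let $N \to \infty$. By hypothesis the frequencies $p_j = \lim_N N_j/N$ exist by (\ref{eqn:freq}), and each coefficient $\log(e^{\lambda v_j} + e^{-\lambda v_j})$ is a finite constant (for fixed $\lambda$); hence the finite sum on the right converges, which simultaneously establishes the existence of the limit defining $\phi(\lambda)$ and gives its value
$$
\phi(\lambda) = \sum_{j=0}^m p_j \log\left( e^{\lambda v_j} + e^{-\lambda v_j} \right) - \log 2,
$$
which is precisely (\ref{PF}). There is essentially no technical obstacle here: the only substantive input is the Bernoullicity of Lemma \ref{lem1}, which is what turns the expectation into an exact product and makes the limit elementary; the remainder is the bookkeeping of grouping identical weights and invoking the convergence of the frequencies $N_j/N \to p_j$.
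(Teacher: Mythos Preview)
Your proof is correct and is essentially identical to the paper's first proof: apply Lemma \ref{lem1} with $t_k=\lambda w_k$ to obtain the exact product formula (\ref{PF2}), then group factors by the value of $w_k$ and pass to the limit using the frequency hypothesis. The paper also supplies a second, independent derivation of (\ref{PF2}) via the transfer matrices $A_n=\begin{pmatrix} e^{\lambda w_n} & e^{-\lambda w_n}\\ e^{-\lambda w_n} & e^{\lambda w_n}\end{pmatrix}$ and their simultaneous diagonalization, included because that method extends to situations where Lemma \ref{lem1} is unavailable.
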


\begin{proof} Lemma \ref{lem1} gives 

	\begin{equation}\label{PF2} \mathbb{E} e^{\lambda \sum_{n=1}^N w_n x_nx_{n+1}} = 2^{-N} \prod_{n=1}^N (e^{\lambda w_n} + e^{-\lambda w_n}). 
\end{equation}
Then (\ref{PF}) follows immediately if we use the hypothesis on $(w_n)$.
\medskip

We would like to give another proof of (\ref{PF2}). One reason is to get rid of Lemma \ref{lem1} which could be mysterious for some readers. The other reason is that this method  will allow us to treat other cases. 

Since the function $\sum_{n=1}^N w_nx_n x_{n+1}$ is constant on cylinders of length $N+1$, we have
$$
\mathbb{E} e^{\lambda \sum_{n=1}^N w_n x_nx_{n+1}}
= \frac{1}{2^{N+1}} \sum_{x_1, \cdots, x_{N+1} \in \{-1, 1\}}
e^{\lambda \sum_{n=1}^N w_n x_nx_{n+1}}.
$$
Let $$
A_n = 
\begin{pmatrix}   e^{\lambda w_n}  &  e^{-\lambda w_n}\\
e^{-\lambda w_n}  &  e^{\lambda w_n}          
\end{pmatrix}.
$$
It is clear that
\begin{equation}\label{comput1}
\mathbb{E} e^{\lambda \sum_{n=1}^N w_n x_nx_{n+1}}
= \frac{1}{2^{N+1}}  \|A_1\cdots A_N\|_S
\end{equation}
where $\|B\|_S$ denotes the norm of a matrix $B$,  the sum of all elements of  $B$. 
Notice that all $A_n$'s are of the form
$$
A= \begin{pmatrix}   a  &b \\
b &    a             
\end{pmatrix}     \quad  (a,b \in \mathbb{R})
$$
which commute each other. 
Indeed, they can be simultaneously diagonalized as follows
\begin{equation}\label{WF}
R^{-1} A R = D:=\begin{pmatrix}   a+b  &0 \\
0 & a-b           
\end{pmatrix},
\end{equation}
where 
$$
R = \frac{\sqrt{2}}{2}\begin{pmatrix}   1  & -1\\
1&    1\\
\end{pmatrix}, \quad  
R^{-1} = \frac{\sqrt{2}}{2}\begin{pmatrix}   1  & 1\\
-1&    1\\
\end{pmatrix},
$$
which are independent of $a$ and $b$. 
Apply (\ref{WF}) to $A=A_n$ ($1\le n \le N$) to get $R^{-1}A_n R = D_n$ with
$$
D_n  = 
\begin{pmatrix}   e^{\lambda w_n}+ e^{-\lambda w_n}  &\\
& e^{\lambda w_n} -e^{-\lambda w_n}           
\end{pmatrix}.
$$
Then $A_1\cdots A_n = R D_1\cdots D_N R^{-1}$. Notice that
$$
R  \begin{pmatrix}   u  & \\
& v           
\end{pmatrix} R^{-1} =\frac{1}{2}
\begin{pmatrix}   u+v  & u-v \\
u-v & u+v           
\end{pmatrix}
$$
and the sum of entries of the last matrix equals to $2u$.
Then 
$$
\|A_1\cdots A_n\|_S = 2 \prod_{n=1}^N (e^{\lambda w_n} + e^{-\lambda w_n}).
$$
This, together with (\ref{comput1}), leads to (\ref{PF2}). 
\end{proof}

\subsection{Proof of Theorem \ref{thm1}}
By Lemma \ref{lem:pressure}, we have
$$\psi(\lambda)= \sum_{j=0}^m p_j \log (e^{\lambda v_j} + e^{-\lambda v_j}).
$$
According to Theorem \ref{thm:main1}, we have to compute the conjugate function $\psi^*$. By simple calculations we get that 
$$
\psi'(\lambda)
= \sum_{j=0}^m p_j v_j \frac{e^{\lambda v_j}- e^{-\lambda v_j}}{e^{\lambda v_j}+e^{-\lambda v_j}} = \sum_{j=0}^m p_jv_j \frac{e^{2 \lambda v_j}- 1}{e^{2\lambda v_j}+1} = \sum_{j=0}^m p_j v_j - 2 \sum_{j=0}^m \frac{ p_j v_j}{e^{2\lambda v_j}+1}.
$$
$$
\phi'(+\infty) = \sum_{j=0}^m p_j |v_j|, \quad \phi'(-\infty) = - \sum_{j=0}^m p_j |v_j|.
$$
$$
\psi''(\lambda)
=  4 \sum_{j=0}^m  \frac{p_jv_j^2 e^{2 \lambda v_j}}{(e^{2\lambda v_j}+1)^2} >0.
$$
Then, for any $\alpha \in (\psi'(-\infty), \psi'(+\infty))$, there exists a unique
$\lambda_\alpha$ such that $\psi'(\lambda_\alpha) =\alpha$, i.e.
$$
\sum_{j=0}^m p_j v_j \frac{e^{\lambda_\alpha v_j}- e^{-\lambda_\alpha v_j}}{e^{\lambda_\alpha v_j}+e^{-\lambda_\alpha v_j}} = \alpha.
$$
So, $\psi^*(\alpha) =  \alpha \lambda_\alpha - \psi(\lambda_\alpha) $, which gives the formula
$$
\dim E(\alpha) =  
\frac{1}{\log 2}\sum_{j=0}^m p_j \left(\log (e^{\lambda_\alpha v_j} + e^{-\lambda_\alpha v_j})
-  \lambda_\alpha v_j \frac{e^{\lambda_\alpha v_j}- e^{-\lambda_\alpha v_j}}{e^{\lambda_\alpha v_j}+e^{-\lambda_\alpha v_j}}\right).
$$

\subsection{Gibbs measures are Markovian measures}
In the case $$f_n(x_n, x_{n+1}) = w_n x_n x_{n+1},$$
we can directly prove Theorem \ref{thm1} without using Theorem
 \ref{thm:main1}. Because 
we can directly construct the Gibbs measures as  inhomogeneous Markov measures and compute their dimensions without using Lemma \ref{lem:dim}. 

Consider the stochastic matrix  $$
P_n = 
\frac{1}{e^{\lambda w_n}+e^{-\lambda w_n}} 
\begin{pmatrix}   e^{\lambda w_n} &  e^{-\lambda w_n}\\
e^{-\lambda w_n}  &  e^{\lambda w_n}          
\end{pmatrix}.
$$
We denote it by $(p_{i, j}^{(n)})$. It is clear
that $(\frac{1}{2}, \frac{1}{2})$ is a left invariant probability vector of all
$P_n$. Let us define the inhomogeneous Markov measure $\mu_{\lambda}$ by
$$
   \mu_{\lambda}([x_0x_1\cdots x_n])= \frac{1}{2}p^{(0)}_{x_0,x_1}\cdots p^{(n-1)}_{x_{n-1},x_n}.
$$
In other words,
\begin{equation}\label{def:markov}
\mu_{\lambda}([x_0 x_1\cdots x_n])= \frac{1}{2 Z_n(\lambda)} 
\exp \left( \lambda \sum_{k=0}^{n-1} w_kx_k x_{k+1}\right).
\end{equation}
where $Z_n(\lambda) = \prod_{k=0}^{n-1}(e^{\lambda w_k} + e^{-\lambda w_k})$.

\begin{lemma}[Law of large numbers] \label{LLN3}For every $n\ge 0$ we have
	   $$
	   \int x_{n} x_{n+1} d\mu_{\lambda}(x) = \frac{e^{\lambda w_n}- e^{-\lambda w_n}}{e^{\lambda w_n}+e^{-\lambda w_n}}.$$
	   For $\mu_{\lambda}$-almost all $x$ we have
	   \begin{equation}\label{LLN}
	      \lim_{N\to\infty} \frac{1}{N}\sum_{n=0}^{N-1} w_n x_n x_{n+1}
	      = \sum_{j=0}^m p_jv_j \frac{e^{\lambda v_j}- e^{-\lambda v_j}}{e^{\lambda v_j}+e^{-\lambda v_j}}.
	   \end{equation}
	\end{lemma}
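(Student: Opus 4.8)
The plan is to reduce both assertions to a single structural fact: under $\mu_{\lambda}$ the products $\xi_n := x_n x_{n+1}$ form an \emph{independent} sequence. Once this is established the first moment identity is immediate and the averaged limit (\ref{LLN}) becomes a routine strong law of large numbers. First I would rewrite the transition probabilities from (\ref{def:markov}) in the compact form $p^{(k)}_{x_k, x_{k+1}} = e^{\lambda w_k x_k x_{k+1}}/(e^{\lambda w_k}+e^{-\lambda w_k})$, valid because $x_k x_{k+1} = 1$ precisely when $x_k = x_{k+1}$. Since $(\tfrac12,\tfrac12)$ is left-invariant for every $P_k$, each coordinate $x_n$ is uniform on $\{-1,1\}$, so the joint law of $(x_n,x_{n+1})$ is $\tfrac12\, p^{(n)}_{x_n,x_{n+1}}$; summing $x_nx_{n+1}$ against it gives $\int x_nx_{n+1}\,d\mu_{\lambda} = (e^{\lambda w_n}-e^{-\lambda w_n})/(e^{\lambda w_n}+e^{-\lambda w_n})$ after collecting the two terms with $\xi_n=+1$ and the two with $\xi_n=-1$, which is the first displayed identity.

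The key step is the independence claim, the Markovian analogue of the Bernoullicity of Lemma \ref{lem1}. I would use that the map $(x_0,x_1,\dots,x_n)\mapsto(x_0,\xi_0,\dots,\xi_{n-1})$, with $\xi_k=x_kx_{k+1}$, is a bijection of $\{-1,1\}^{n+1}$. Substituting the compact transition form into (\ref{def:markov}) shows $\mu_{\lambda}([x_0\cdots x_n]) = \tfrac12\prod_{k=0}^{n-1} q_k(\xi_k)$, where $q_k(\epsilon) = e^{\lambda w_k\epsilon}/(e^{\lambda w_k}+e^{-\lambda w_k})$ satisfies $q_k(1)+q_k(-1)=1$. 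Carried across the bijection, this product form says exactly that $x_0,\xi_0,\xi_1,\dots$ are mutually independent under $\mu_{\lambda}$, with $\xi_k$ taking value $\epsilon\in\{-1,1\}$ with probability $q_k(\epsilon)$ and hence $\mathbb{E}_{\mu_{\lambda}}\xi_k = (e^{\lambda w_k}-e^{-\lambda w_k})/(e^{\lambda w_k}+e^{-\lambda w_k})$. I expect this factorization to be the only genuinely delicate point; it hinges on the weights entering solely through $w_k x_k x_{k+1}$, so that a single bijective change of variables decouples the dependence introduced by the Markov structure.

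Finally I would conclude with the strong law. The variables $Y_n := w_n\xi_n$ are independent and uniformly bounded by $\max_j|v_j|$, so their variances are bounded and $\sum_n \operatorname{Var}(Y_n)/n^2 < \infty$; Kolmogorov's strong law then yields $\frac1N\sum_{n=0}^{N-1}(Y_n-\mathbb{E}_{\mu_{\lambda}}Y_n)\to 0$ for $\mu_{\lambda}$-almost every $x$. It remains to identify the limit of the means: $\mathbb{E}_{\mu_{\lambda}}Y_n = w_n(e^{\lambda w_n}-e^{-\lambda w_n})/(e^{\lambda w_n}+e^{-\lambda w_n})$ depends on $n$ only through the value $v_j$ taken by $w_n$, so grouping the indices according to $w_n=v_j$ and invoking the frequencies (\ref{eqn:freq}) (the shift from $1\le n\le N$ to $0\le n\le N-1$ being irrelevant in the limit) gives $\frac1N\sum_{n=0}^{N-1}\mathbb{E}_{\mu_{\lambda}}Y_n\to \sum_{j=0}^m p_jv_j(e^{\lambda v_j}-e^{-\lambda v_j})/(e^{\lambda v_j}+e^{-\lambda v_j})$, which is precisely (\ref{LLN}).
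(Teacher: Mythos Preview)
Your proof is correct and in fact establishes more than the paper does. The paper's argument for the first identity is essentially the same as yours (a direct computation using that $(1/2,1/2)$ is left-invariant for each $P_k$). For the almost-sure limit, however, the paper proceeds differently: it only asserts that the centered variables $Y_n - \mathbb{E}_{\mu_\lambda} Y_n$ are \emph{orthogonal} (not independent), and then concludes via the Menshov theorem and Kronecker's lemma.

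Your route is genuinely different and arguably cleaner. The bijection $(x_0,\dots,x_n)\mapsto(x_0,\xi_0,\dots,\xi_{n-1})$ together with the factorization $\mu_\lambda([x_0\cdots x_n])=\tfrac12\prod_k q_k(\xi_k)$ gives full \emph{independence} of the $\xi_k$ under $\mu_\lambda$, which is strictly stronger than orthogonality and lets you invoke the standard Kolmogorov strong law instead of the more specialized Menshov--Rademacher machinery. The trade-off is that your argument exploits the very specific structure of $f_n(x)=w_n x_n x_{n+1}$ (the transition probability depends only on the product $x_kx_{k+1}$), whereas the paper's orthogonality route would generalize more readily to other one-step potentials where such a decoupling bijection is unavailable. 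For the result as stated, your approach is both simpler and more informative.
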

\begin{proof} By the definition of $\mu_{\lambda}$ we have 
	   $$\int x_{n} x_{n+1} d\mu_{\lambda}(x) 
	   = \sum_{x_0, x_1, \cdots, x_{n+1} } x_{n} x_{n+1}\cdot \frac{1}{2} p^{(0)}_{x_0, x_1} \cdots p^{(n)}_{x_n, x_{n+1}}.$$
	   Since $\sum_i p^{(k)}_{i, j} =1$, we have
	   $$\int x_{n} x_{n+1} d\mu_{\lambda}(x) 
	   =  \frac{1}{2}  \sum_{x_{n}, x_{n+1} } x_{n} x_{n+1} p^{(n)}_{x_{n}, x_{n+1}} = \frac{e^{\lambda w_n}- e^{-\lambda w_n}}{e^{\lambda w_n}+e^{-\lambda w_n}}$$
	   where the last equality follows from the definition of $P_n$.
	 	
	 	Let $Y_n = x_nx_{n+1}$. Similar computation shows that $Y_n - \mathbb{E}_{\mu_\lambda} Y_n$
	 	are orthogonal. Then (\ref{LLN}) follows from the Menshov theorem (see \cite{KSZ1948}) and the Kronecker lemma (see \cite{Shiryaev1996}).
	\end{proof}

\begin{lemma}[Dimensions of Markov-Gibbs measures]
	$$\dim \mu_{\lambda} = \frac{1}{\log 2}\sum_{j=0}^m p_j \left(\log (e^{\lambda v_j} + e^{-\lambda v_j})
	-  \lambda v_j \frac{e^{\lambda v_j}- e^{-\lambda v_j}}{e^{\lambda v_j}+e^{-\lambda v_j}}\right).
	$$ 
	\begin{proof} From the definition (\ref{def:markov}) of $\mu_{ \lambda}$, we have
		$$
		  \frac{\log \mu_{\lambda}([x_0 x_1\cdots x_n])}{\log 2^{-n}}
		  = \frac{\log_2 Z_n(\lambda)}{n}    - \frac{\lambda}{\log 2 \cdot n}\sum_{k=0}^{n-1} w_k x_{k}x_{k+1}+ o(1)
		$$
		Then by Lemma \ref{LLN3}, $\mu_{\lambda}$-a.e. we have
			$$
			D(\mu_{\lambda}, x)
			= \frac{1}{\log 2}\sum_{j=0}^m p_j \log (e^{\lambda v_j} + e^{-\lambda v_j})
			-  \frac{\lambda}{\log 2} \sum_{j=0}^m p_jv_j \frac{e^{\lambda v_j}- e^{-\lambda v_j}}{e^{\lambda v_j}+e^{-\lambda v_j}}.
			$$
		\end{proof}
\end{lemma}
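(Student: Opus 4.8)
The plan is to obtain the value of $\dim\mu_\lambda$ by computing the local dimension of $\mu_\lambda$ at $\mu_\lambda$-almost every point and then appealing to the general measure--dimension theory recalled in Proposition \ref{prop:fan}. Starting from the explicit Markov expression (\ref{def:markov}) and keeping in mind that here $q=2$, I would take logarithms and divide by $\log 2^{-n}=-n\log 2$, which gives
$$
\frac{\log \mu_{\lambda}([x_0 x_1 \cdots x_n])}{\log 2^{-n}}
=\frac{1}{\log 2}\cdot\frac{\log Z_n(\lambda)}{n}
-\frac{\lambda}{\log 2}\cdot\frac{1}{n}\sum_{k=0}^{n-1} w_k x_k x_{k+1}+o(1),
$$
the additive constant coming from the factor $\tfrac12$ being absorbed in the $o(1)$. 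This reduces the problem to two separate limits.

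The first limit is deterministic. Since $Z_n(\lambda)=\prod_{k=0}^{n-1}(e^{\lambda w_k}+e^{-\lambda w_k})$, we have $\frac1n\log Z_n(\lambda)=\frac1n\sum_{k=0}^{n-1}\log(e^{\lambda w_k}+e^{-\lambda w_k})$, and grouping the summands according to the value $v_j$ of $w_k$ and using the frequencies $p_j$ of (\ref{eqn:freq}) yields $\frac1n\log Z_n(\lambda)\to\sum_{j=0}^m p_j\log(e^{\lambda v_j}+e^{-\lambda v_j})$, a limit that does not depend on $x$. The second limit is where the measure $\mu_\lambda$ enters: by Lemma \ref{LLN3}, for $\mu_\lambda$-almost every $x$ the average $\frac1n\sum_{k=0}^{n-1}w_kx_kx_{k+1}$ converges to $\sum_{j=0}^m p_j v_j\frac{e^{\lambda v_j}-e^{-\lambda v_j}}{e^{\lambda v_j}+e^{-\lambda v_j}}$. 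Substituting both limits shows that $\underline{D}(\mu_\lambda,x)=\overline{D}(\mu_\lambda,x)$ exists and equals the claimed constant for $\mu_\lambda$-almost every $x$. Since this common value is $\mu_\lambda$-a.e.\ constant, its essential infimum and essential supremum coincide, and Proposition \ref{prop:fan} then gives $\dim_*\mu_\lambda=\dim^*\mu_\lambda$ equal to this constant (and likewise for the packing dimensions), which is exactly the asserted formula.

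The genuinely substantive step is the law of large numbers of Lemma \ref{LLN3}, which I may invoke here but which is the real obstacle behind the statement: under $\mu_\lambda$ the variables $Y_k=x_kx_{k+1}$ are not independent, so the convergence of $\frac1n\sum_{k=0}^{n-1}w_kY_k$ is not automatic. The key is that the centered variables $Y_k-\mathbb{E}_{\mu_\lambda}Y_k$ are pairwise orthogonal in $L^2(\mu_\lambda)$, after which the Menshov strong law for orthogonal sequences together with Kronecker's lemma delivers the almost-sure Ces\`aro convergence. Once that is granted, the passage to $\dim\mu_\lambda$ is the routine substitution described above.
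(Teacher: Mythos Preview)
Your argument is correct and follows exactly the paper's route: express the local ratio via (\ref{def:markov}), handle the deterministic term $\frac1n\log Z_n(\lambda)$ through the frequencies (\ref{eqn:freq}), and invoke Lemma \ref{LLN3} for the random term to obtain a $\mu_\lambda$-a.e.\ constant local dimension. You have simply made explicit the appeal to Proposition \ref{prop:fan} and the computation of $\lim\frac1n\log Z_n(\lambda)$ that the paper leaves implicit.
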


\section{Proof of Theorem \ref{thm2}}
In the case of M\"{o}bius weights $w_n = \mu(n)$, by Lemma \ref{lem:pressure} we have
$$
   \psi(\lambda) = f_0\log 2  + (1-f_0) \log (e^{\lambda} + e^{-\lambda})
$$
where $1-f_0 = \frac{6}{\pi^2}$, because it is well known that
$$
  \lim_{N\to\infty}\frac{1}{N}\sum_{n=1}^{N}\mu(n) =0, \quad
  \lim_{N\to\infty}\frac{1}{N}\sum_{n=1}^{N}|\mu(n)| =\frac{6}{\pi^2}
$$
(see \cite{Tenenbaum1995}, Theorem 3.8 and Theorem 3.10).
 For $\alpha \in (-6/\pi^2, 6/\pi^2)$
we can solve the equation $\psi'(\lambda) =\alpha$, i.e.
$$
     \frac{6}{\pi^2}\frac{e^{\lambda} - e^{-\lambda}}{e^{\lambda} + e^{-\lambda}}=\alpha.
$$
Indeed, let $\alpha' = \frac{\alpha}{6/\pi^2} \in (0,1)$. Then we get the solution $\lambda_\alpha$:
$$
     e^{\lambda_\alpha} = \sqrt{\frac{1+\alpha'}{1-\alpha'}},
     \quad i.e. \ \ \lambda_\alpha = \frac{1}{2} \log \frac{1+\alpha'}{1-\alpha'}.
$$
Then 
\begin{eqnarray*}	    
	- \psi^*(\alpha) &=& \psi(\lambda_\alpha) - \alpha \lambda_\alpha \\ 
	 &=&  f_0\log 2 + (1-f_0) \log \left( \sqrt{\frac{1+\alpha'}{1-\alpha'}} + \sqrt{\frac{1-\alpha'}{1+\alpha'}} \right) - \frac{\alpha}{2} \log \frac{1+\alpha'}{1-\alpha'}.
	\end{eqnarray*}
Notice that 
\begin{eqnarray*}
    2 \log \left( \sqrt{\frac{1+\alpha'}{1-\alpha'}} + \sqrt{\frac{1-\alpha'}{1+\alpha'}} \right)
    & = & \log \left( \frac{1+\alpha'}{1-\alpha '} +  \frac{1-\alpha'}{1+\alpha '} +2 \right)\\
    &= & \log \frac{4}{(1+\alpha')(1-\alpha')}.
\end{eqnarray*}
So, letting $p=(1+\alpha')/2$ and $p' =(1-\alpha')/2$, we get
\begin{eqnarray*}	    
	- \psi^*(\alpha)  
	&=&  f_0\log 2 + \frac{1-f_0}{2} \log \frac{4}{(1+\alpha')(1-\alpha')}
	 - (1-f_0) \frac{\alpha'}{2} \log \frac{1+\alpha'}{1-\alpha'}\\
	 &=& f_0 \log 2 - (1-f_0)\frac{1}{2} \log (p p') - (1-f_0) \frac{\alpha'}{2} \log (p/p') \\
	 & = & f_0 \log 2 - (1-f_0)\left( \frac{1+\alpha}{2} \log p  +  \frac{1-\alpha'}{2} \log p' \right)\\
	 & = & f_0 \log 2 +(1-f_0) H(p). 
\end{eqnarray*}
So, by Theorem \ref{thm1}, we get 
$$
	    \dim F(\alpha) = 1- \frac{6}{\pi^2} + 
	    \frac{6}{\pi^2 \log 2} H\left( \frac{1}{2} + \frac{ \pi^2}{12} \alpha \right).
	$$

The above proof, without any change, actually proves the the following more general result.

\begin{theorem} 
Assume that 
 $(w_n)$ is a sequence  taking $-1, 0, 1$ as values and having $f_0$ as the frequency of $0$'s. For any 
$\alpha \in (-(1-f_0), 1-f_0)$, we have
$$
\dim F(\alpha) = f_0 + 
\frac{1-f_0}{\log 2} H\left( \frac{1}{2} + \frac{ \alpha}{2 (1-f_0)}  \right),
$$
where
$$
  F(\alpha) = \left\{x \in \{-1,1\}^\mathbb{N}: \lim_{N\to\infty} \frac{1}{N}\sum_{n=1}^N w_n x_nx_{n+1}=\alpha \right\}.
$$
\end{theorem}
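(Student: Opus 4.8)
The plan is to repeat, essentially verbatim, the computation carried out in the proof of Theorem \ref{thm2}, the only change being that the numerical frequency $6/\pi^2$ of the nonzero M\"obius values is replaced by the general quantity $1-f_0$. The whole point is that the pressure attached to $f_n(x)=w_nx_nx_{n+1}$ depends on the weights only through their absolute values, a fact already visible in Lemma \ref{lem1} and in the remark following Theorem \ref{thm1}. Concretely, by the Bernoullicity of $(x_nx_{n+1})_n$ one has, exactly as in (\ref{PF2}),
$$
\mathbb{E}\, e^{\lambda \sum_{n=1}^N w_n x_n x_{n+1}} = 2^{-N}\prod_{n=1}^N \left(e^{\lambda w_n} + e^{-\lambda w_n}\right),
$$
and each factor $e^{\lambda w_n}+e^{-\lambda w_n}=2\cosh(\lambda w_n)$ equals $2$ when $w_n=0$ and equals $e^{\lambda}+e^{-\lambda}$ when $|w_n|=1$. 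Thus the separate frequencies of $-1$ and of $+1$ never enter: only the frequency $f_0$ of the zeros, and hence the frequency $1-f_0$ of the nonzero terms, is relevant.

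First I would check the hypotheses. Condition (C1) of Theorem \ref{thm1} holds with $g_n(x_{n+1},\dots)=x_{n+1}\in\{-1,1\}$, and (H2) is immediate since $f_n$ is a nearest-neighbour interaction, so that $\sum_k |f_k(x)-f_k(y)|$ stays bounded whenever $x$ and $y$ share a common prefix. For (H1) I would form the Cesàro average of $\frac1N\log \mathbb{E}\,e^{\lambda\sum w_n x_nx_{n+1}}$ and group the $N$ summands according to whether $w_n=0$ or $|w_n|=1$; using only that the frequency of zeros is $f_0$, the limit exists and
$$
\psi(\lambda) = f_0\log 2 + (1-f_0)\log\left(e^{\lambda}+e^{-\lambda}\right),
$$
which is precisely the pressure function obtained in the proof of Theorem \ref{thm2}. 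Since $\psi$ is analytic and strictly convex, with $\psi'(\lambda)=(1-f_0)\tanh\lambda$ sweeping out the interval $\big(-(1-f_0),\,1-f_0\big)$, the differentiable case (\ref{eq:spectrum}) of Theorem \ref{thm:main1} applies and gives $\dim F(\alpha)={\rm Dim}\,F(\alpha)=-\psi^*(\alpha)/\log 2$ for every such $\alpha$. It then only remains to transcribe the Legendre computation of the proof of Theorem \ref{thm2}: solving $\psi'(\lambda_\alpha)=\alpha$ yields $\lambda_\alpha=\tfrac12\log\frac{1+\alpha'}{1-\alpha'}$ with $\alpha'=\alpha/(1-f_0)$, whence
$$
-\psi^*(\alpha)=\psi(\lambda_\alpha)-\alpha\lambda_\alpha = f_0\log 2 + (1-f_0)\,H(p), \qquad p=\tfrac12+\tfrac{\alpha}{2(1-f_0)},
$$
and dividing by $\log 2$ produces the announced formula.

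The main obstacle here is conceptual rather than computational: one must notice that $F(\alpha)$ can be analysed \emph{without} assuming the frequencies of $+1$ and $-1$ to exist separately, the reason being that $w_nx_nx_{n+1}$ and $|w_n|\,x_nx_{n+1}$ have the same law because $x_nx_{n+1}$ is a symmetric sign. This is exactly the place where the coarser grouping above must replace the direct appeal to Lemma \ref{lem:pressure}, whose statement presupposes that all the individual frequencies $p_j$ converge. Once the reduction to the single parameter $f_0$ is justified, the remainder is a line-by-line repetition of the M\"obius argument.
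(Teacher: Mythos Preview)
Your proposal is correct and follows the same route the paper takes: the paper simply states that ``the above proof, without any change, actually proves'' this theorem, meaning one reruns the Legendre computation of Theorem \ref{thm2} with $1-f_0$ in place of $6/\pi^2$. Your explicit remark that the factors $e^{\lambda w_n}+e^{-\lambda w_n}$ depend only on $|w_n|$, so that (H1) holds and $\psi$ is determined once the single frequency $f_0$ exists, is a welcome clarification of a point the paper leaves implicit (since Lemma \ref{lem:pressure} and condition (C2) of Theorem \ref{thm1} nominally ask for all the $p_j$).
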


\bigskip

\section{Final remarks}

R.1. We can consider complex valued or vector valued functions $f_n$. Assume that $f_n$'s take values in $\mathbb{R}^d$. Then we have to change the definition of $Z_n(\lambda)$ as follows 
$$
Z_n(\lambda) =\mathbb{E} \exp \left(\lambda \cdot \sum_{k=0}^{n-1}f_k(x_k, x_{k+1}, \dotsc)\right), 
\quad (\lambda \in \mathbb{R}^d)
$$  
where $\lambda \cdot a$ denotes the inner product in $\mathbb{R}^d$.

R.2. Theorem \ref{thm:main1} is not applicable to the case
$$f_n(x_n, x_{n+1},\cdots) =w_n x_n x_{2n}.$$ 
Because, although the condition 
(H1) is satisfied (see Lemma \ref{lem:pressure}), but the condition (H2) is not satisfied. 
New ideas are needed to study this case.
The special case where $w_n=1$ for all $n$ was treated in \cite{FSW,PS2011}. The more general case $f_n(x) = f(x_n, x_{2n})$ was studied in \cite{FSW} and the method of \cite{FSW} could be used to treat a general $(w_n)$ which takes a finite number of values and admits frequencies for all possible values.  See \cite{FLW2018,LR2013,PSSS2014, Wu-thesis} for related works.
A form of non-linear thermodynamic formalism based on solutions to a nonlinear
equation was useful for such a problem \cite{KPS2012,FSW2011,FSW}.
The idea comes from Kenyon, Peres and Solomyak \cite{KPS2012}.
Pollicott \cite{Pollicott2017} considered a more general setting of nonlinear transfer operator. For a nonlinear Perron-Frobenius theory,
see \cite{LN2012}. 
\medskip

R.3. 
If $f_n(x_n, x_{n+1},\cdots)$ is of the form $w_n f(x_n, x_{n+1})$
where $f: S \times S \to \mathbb{R}$ is an arbitrary function, we can apply Theorem \ref{thm:main1} to this case. But we have to make sure that the limit defining $\phi$ exists. Better is to ensure the differentiability of $\phi$. However both are questionable  and works are to be done for a given weight $(w_n)$, except for the dynamically produced weights considered in Theorem \ref{thm:random},  Theorem \ref{thm:UE} and Theorem \ref{thm:main2}. 
\medskip

{\bf Problem 1.} Find conditions on $(w_n)$ and on $f$ such that 
$\phi$ is well defined and differentiable.  

In the following, we discuss some sub-problems.  
\medskip

R.4. A very special case of Problem 1 is as follows.

{\bf Problem 2.} Suppose that $w_n$ is the M\"{o}bius function $\mu(n)$
and $f: \{0,1\}\times \{0,1\} \to \{0,1\}$ is defined by
$f(x, y) = xy$. Is $\phi$ well defined and differentiable ?
We emphasize that it is $\{0,1\}$ but not $\{-1, 1\}$. If we would like to work with $\{-1, 1\}$, the problem arises for 
$$
      f(x, y) = a xy +bx+cy     \quad (a, b,c \ {\rm being\ \ constants}).
$$

Here is an idea to attack such a problem, which was used for proving Theorem \ref{thm:random} and
Lemma \ref{lem:pressure}. Assume $f_n(x_n, x_{n+1},\cdots)=w_n f(x_n, x_{n+1})$
where $f: S \times S \to \mathbb{R}$ and $(w_n)$ are given. For any
$\lambda\in \mathbb{R}$, define a $S\times S$-matrix
	 $$
	A_n: = A_n(\lambda): = \Big( e^{\lambda f(i, j)}\Big)_{(i, j)\in S\times S}.
	$$
	By the same argument as in the proof of (\ref{comput1}), we can obtain 
	\begin{equation} 
	Z_n (\lambda) =\mathbb{E} e^{\lambda \sum_{n=1}^N w_n f(x_n,x_{n+1})}
	= \frac{1}{q^{N+1}}  \|A_1\cdots A_N\|
	\end{equation}
	where $\|B\|$ denotes the sum of all elements of a matrix $B$. 
	So, we are led to prove the existence of the following 
	Liapounov exponent
	\begin{equation}\label{eqn:Liap}
	   L(\lambda) = \lim_{N\to\infty}\frac{1}{N} \log \|A_1\cdots A_N\|.
	\end{equation}
	Notice that $L(\lambda)$ is nothing but $\psi(\lambda)$, if the
	limit in (\ref{eqn:Liap}) exists. 
	
	Remark that for the case concerned by Problem 2, the matrix 
	$A_n(\lambda)$ takes a simple form
	$$
	A_n(\lambda)= \begin{pmatrix}   1  &1 \\
	1 &    e^{\lambda \, \mu(n) }             
	\end{pmatrix}.    
	$$
	
\bigskip
	
R.5. 	Keep the same notation as in R.4. Replace $(w_n)$ by   is a sequence of independent and identically distributed random variables $(\omega_n)$ taking a finite number of values.  
By Theorem 
\ref{thm:random}, $L(\lambda)$ is analytic.
	The method presented by Pollicott in  \cite{Pollicott2010} can be used to numerically compute $L(\lambda)$.
		\medskip
	
R.6. 	If $(w_n)$ is a primitive substitutive sequence, we have proved that $L(\lambda)$ is well defined and is analytic (Theorem \ref{thm:UE}). 
	\medskip
	
		{\bf Problem 3. } Suppose that $(w_n)$ is a primitive substitutive sequence. Then $L(\lambda)$ differentiable, by Theorem \ref{thm:UE}.
		Is it possible to get a closed form for $L(\lambda)$ ?
		How about Thue-Morse sequence or other specific sequences?
		
		\medskip
	
R.7. Following Katznelson and Weiss \cite{KW1982}, Furman (\cite{Furman1997}, Theorem 1) proved that on any unique ergodic system $(\Omega, \Theta)$, there exist continuous subadditive cocyles $(f_n)$ such that
the limit of $n^{-1}f_n(\omega)$ doesn't exist for some $\omega$. Our pressures considered in Theorem \ref{thm:UE} 
are defined by the limit for  special cycles. By Theorem \ref{thm:UE}, the limit defining the pressure does exist under the condition that $f$ depends only on a finite number of coordinates. 

	{\bf Problem 4. }
Can we drop this condition of dependence on finite coordinates but we assume that $f$
is of bounded variation ? 
\bigskip

Finally, let us repeat that we are interested in evaluating
$$
   \lim_{n \to\infty} \frac{1}{n} \log \left\|
   \begin{pmatrix}   1  &1 \\
   1 &    e^{\lambda \, w_1 }
   \end{pmatrix} 
   \begin{pmatrix}   1  &1 \\
   1 &    e^{\lambda \, w_2 } 
   \end{pmatrix} 
   \cdots \begin{pmatrix}   1  &1 \\
   1 &    e^{\lambda \, w_n } 
   \end{pmatrix} 
   \right\|
$$    
for different weights $(w_n)$. Three questions are associated:
does the limit exist ? is the limit differentiable as function of $\lambda$ ? is it possible to compute the limit ?

\bigskip

{\em Addendum} B. B\'ar\'any, M. Rams and  R. X.  Shi have obtained some results similar to  Theorem \ref{thm:random} and Theorem  \ref{thm1} with a different approach, which will be presented in a forthcoming paper.

\end{document}